\newtheorem{theorem}{Theorem}[section]
\newtheorem*{theorem*}{Theorem B} 
\newtheorem{lemma}[theorem]{Lemma}
\newtheorem{proposition}[theorem]{Proposition}
\newtheorem*{definition*}{Definition}
\newtheorem*{remark*}{Remark}
\newtheorem*{observation*}{Observation}
\newtheorem*{assumption*}{Assumption}
\newtheorem*{question*}{Question}
\newcommand{\R}{\mathbb{R}}
\newcommand{\N}{\mathbb{N}}
\newcommand{\Z}{\mathbb{Z}}
\newcommand{\C}{\mathbb{C}}
\newcommand{\E}{\mathbb{E}}
\newcommand{\T}{\mathbb{T}}
\newcommand{\PP}{\mathbb{P}}
\newcommand{\X}{\mathfrak{X}}
\newcommand{\Cov}{\mathrm{Cov}}
\newcommand{\an}{\text{\, and \,}}
\newcommand{\m}{\mathfrak{m}}
\begin{document}

\title{Some  properties of stationary determinantal point processes on $\mathbb{Z}$}

\author
{Aihua Fan}
\address
{Aihua FAN:   LAMFA, UMR 7352, CNRS, University of Picardie, 33 Rue Saint Leu, Amiens, France \&
School of Mathematics and Statistics, Central China Normal University, 430079 Wuhan, China
}
\email{ai-hua.fan@u-picardie.fr}

\author
{Shilei Fan}
\address
{Shilei FAN: School of Mathematics and Statistics, \& Hubei Key Laboratory of Mathematical Sciences, Central China Normal University, Wuhan 430079, China}
\email{slfan@mail.ccnu.edu.cn}

\author
{Yanqi Qiu}
\address
{Yanqi QIU: Institute of Mathematics, Academy of Mathematics and Systems Science, Chinese Academy of Sciences, Beijing 100190, China}
\email{yanqi.qiu@hotmail.com}

\begin{abstract}
	We study  properties of stationary determinantal point processes $\X$ on $\Z$ from different points of views. It is proved that $\X\cap \N$
	is almost surely Bohr-dense  and good universal for almost everywhere convergence in $L^1$, and that $\X$ is not syndetic but $\X +\X = \mathbb{Z}$.  For the associated centered random field, we obtain
	a sub-Gaussian property, a Salem-Littlewood inequality and a Khintchine-Kahane inequality. Results can be generalized to $\Z^d$.  
\end{abstract}

\subjclass[2010]{Primary 60G10, 60G55; Secondary 37A50}
\keywords{stationary determinantal point procecesses, sub-Gaussian property, Salem-Littlewood inequality, Khintchine-Kahane inequality}

\maketitle

\setcounter{equation}{0}

\section{Introduction}
Let $\T: = \R/\Z$ be the unit circle, equipped with the normalized  Lebesgue measure $d\m$ (we also use $dt$ for simplifying the notation $d\m(t)$).   For any non-negative Borel function $f: \T  \rightarrow [0, 1]$ such that 
\begin{align}\label{def-sigma}
\sigma: = \int_{\T} f d\m  \in (0, 1),
\end{align}
the kernel $K_f : \Z\times \Z \to \C$   defined by
\begin{align}\label{def-kernel}
K_f (n, m) : = \widehat{f}(n-m) = \int_{\T}  f(t) e^{- 2 \pi i  (n-m) t  } d \m(t) \quad (\forall n, m \in \Z)
\end{align}
determines a self-adjoint bounded operator on the Hilbert space $\ell^2(\Z)$
with spectrum contained in the interval $[0,1]$ and thus induces
a non-trivial stationary determinantal point process $\X$ on $\Z$, see Lyons and Steif \cite{Lyons-stationary} (we exclude the trival cases $\X=\emptyset $ or $\Z$ almost surely,  corresponding to $\sigma=0$ or $1$). More precisely, $\X = \X (\omega)$ is a random subset of $\Z$ (defined on a probability space $(\Omega, \mathcal{A}, \PP)$ where an elementary event in $\Omega$ is denoted by $\omega$), whose distribution is described as follows: if we identify $\X  = \X(\omega)$ with the family of random variables $\xi = (\xi_n(\omega))_{n\in \Z}$ taking values in $\{0, 1\}$ (i.e. a random field on ${\Z}$) in the following natural way: 
\begin{align}
\label{xi}
\xi_n(\omega) = 1   \text{\, iff \,}  n \in \X(\omega),
\end{align}
then for any distinct points  $n_1, \cdots, n_k \in \Z$, we have
\begin{align}\label{def-DPP-bis}
\E(\xi_{n_1} \cdots \xi_{n_k})=  \det (\widehat{f}(n_i - n_j))_{1\le i, j \le k}.
\end{align}
The distribution of $\xi = (\xi_n)_{n\in \Z}$ satisfying \eqref{def-DPP-bis} will be denoted by $\nu_f$, which is a probability Borel measure on $\{0, 1\}^{\Z}$.

When $f$ is equal to a constant $p\in (0,1)$, $\X$ is nothing but the $p$-Bernoulli set, corresponding to the i.i.d.
sequence $(\xi_n)_{n \in \Z}$ such that $\PP(\xi_0 =1) = p =1-\PP(\xi_0=0)$. When $f = \mathds{1}_{[-1/4, 1/4]}$,  the corresponding determinantal point process is the discrete Dyson-sine process induced by the discrete sine kernel: 
\[
K_{[-1/4, 1/4]}(x, y):= \frac{\sin (\frac{\pi}{2}(x -y))}{\pi ( x -y) },\ \ \  \forall (x, y) \in \Z\times \Z \setminus \{(0,0)\}, 
\]
and $K_{[-1/4, 1/4]}(0,0) =\frac{1}{2}$. See \cite{Johansson-sine} for some recent results on Dyson-sine process.
\medskip

In the present paper, we will study some properties of $\X$ as subset of $\Z$ from ergodic theory point of view, functional analysis point of view and arithmetic point of view. 

Let us explain briefly some of our results. Denote $\X_{+}: = \X \cap \N$,  the non-negative part of $\X$. 
 Our first result (Theorem \ref{exactness})
states that 
\begin{equation}\label{eq-exactness}
     a.s. \quad      \lim_{N\to\infty} \frac{1}{N} \sum_{n=0}^{N-1} \xi_n(\omega) e^{2\pi i n t} =0 \ \ \ \forall t \in (0,1).
\end{equation}
That is to say, $\X_+$ is almost surely uniformly distributed on the 
Bohr group, the dual group of $\T$ equipped with the discrete topology, in which $\Z$ is dense.    
In other words, $\X_+$ is a.s. $L^2$-exact in the sense 
of Fan and Schneider \cite[pp. 641-642]{Fan-Schneider}. That is to say, there exists
an event $\Omega_0 \subset \Omega$ with $\PP(\Omega_0)=1$ such that each $\omega \in \Omega_0$ has the following property:  for any measure-preserving dynamical system $(X, \mathcal{B}, \nu, T)$ and any $g\in L^2(\nu)$ we have
$$ 
  L^2-\lim_{N\to\infty} \frac{1}{N} \sum_{n=0}^{N-1} \xi_n(\omega) g(T^n x) = \mathbb{E}(g| \mathcal{J}_T)
$$
where $\mathcal{J}_T$ is the sub-$\sigma$-field of $T$-invariant sets.
We can even prove  that for any integer-coefficient polynomial $P \in \Z[x]$ with degree $d:=\deg P \ge 1$, $P(\X_+)$ is a.s. $L^2$-exact. That is to say
\begin{equation}\label{eq-exactness2}
a.s. \quad
\lim_{N\to\infty} \frac{1}{N} \sum_{n=0}^{N-1} \xi_n(\omega) e^{2\pi i P(n) t} =0  \ \ \ \forall t \in (0,1).
\end{equation}
This is a consequence of  the following inequality of Salem-Littlewood type (Theorem \ref{prop-quasi-G}):
\begin{align}\label{SL-DPP}
\max_{0 \le t \le 1} \left|  \sum_{n=0}^{N-1} (\xi_n(\omega) - \sigma) e^{2 \pi i P(n) t}  \right| \le C_\omega   \sqrt{d N   \log N}.
\end{align}
The classical Salem-Littlewood inequality concerns the $L^\infty$-norm estimate of random trigonometric polynomial with independent coeffcients, see Kahane \cite[Chapter 6]{Kahane-random}. But, in (\ref{SL-DPP}),
the coefficients $\xi_n -\sigma$ are not independent. However 
they do share the sub-Gaussian property with independent variables
(Proposition \ref{lem-qG}):
\begin{align}\label{inq-qG}
\E \left[ \exp\left(\lambda \sum_{n = 0}^{N-1}   (\xi_n(\omega) - \sigma) a_n \right ) \right] \le \exp \left(  \lambda^2 \sum_{n=0}^{N-1} a_n^2\right),
\end{align}
for all $\lambda\in \mathbb{R}$ and all $(a_0, \cdots, a_{N-1}) \in \mathbb{R}^N$. 

As a consequence of (\ref{SL-DPP}) and a recent result in Fan \cite[Theorem 2]{Fan-Osc}, we prove that for any integral polynomial $P\in \Z[x]$ with $\deg P\ge 1$ and $P(\N) \subset \N$, the set $P(\X_+)$ is a.s.  good universal for almost
everywhere convergence in $L^r$  with $r>1$ (we can take $r=1$ when $\deg P=1$).  
More precisely, there exists
an event $\Omega_0 \subset \Omega$ with $\PP(\Omega_0)=1$ such that each $\omega \in \Omega_0$ has the following property:  for any measure-preserving dynamical system $(X, \mathcal{B}, \nu, T)$ and any $g\in L^r(\nu)$  we have (Theorem \ref{thm-erg})
$$ 
\text{$\nu$-a.e. $x\in X$,} \ \ \ \ 
\lim_{N\to\infty} \frac{1}{N} \sum_{n=0}^{N-1} \xi_n(\omega) g(T^{P(n)} x) 
=\sigma   \lim_{N \to \infty}  \frac{1}{ N} \sum_{n = 0}^{N-1}  g(T^{P(n)} x).
$$ 
The almost everywhere convergence on the RHS is ensured by Bourgain's theorem \cite[Theorem 1 and Theorem 2]{Bourgain1989-IHES}. 
When $\deg P=1$, our Theorem \ref{thm-erg} is essentially a direct consequence of Bourgain's return time theorem, see \cite{BFKO1989}.

Assume $P\in \Z[x]$ 	with $\deg P \ge 1$ and $P(\N)\subset \N$. 
As an $L^2$-exact sequence, $\X_{+}$ is ergodic for finite periodic systems so that a.s.   for any integer $q$ and $a$ with $q\not=0$,  $P(\X_{+})$ has an infinite intersection with the arithmetic sequence $q \mathbb{N} +a$. Even we have quantitatively 
$$
\forall 0\le a <q, \ \ \ \lim_{N\to \infty} \frac{\sharp(\{n\in P(\X_{+}) \cap [1, N]: n=a \mod q\})}{ \sharp(\{ P(\X_{+}) \cap [1, N]\})} = \frac{1}{q}.
$$
See (1.3) in \cite[p. 15]{Rosenblatt-Wierdl}.
We have also seen that $P(\X_+)$ is even Bohr dense.
These show the richness of $\X$. However, $\X_{+}$ is not syndetic, namely there are gaps in $\X_+$ as large as possible (Theorem \ref{prop-syndetic}).
Recall that an increasing sequence of integers $(u_n) \subset \mathbb{N}$ is syndetic if it has bounded gaps, i.e. $\sup_{n} (u_{n+1}-u_n)<\infty$.

We  also prove  that $(\xi_n -\sigma)$ is a Riesz system in $L^2(\PP)$
iff $f$ is not an indicator function (Theorem \ref{thm-L2}). More precisely we prove
that 
\begin{align}\label{Riesz}
c_f \left( \sum_n | a_n|^2 \right)^{1/2}\le 
\left\|\sum_n a_n (\xi_n - \sigma) \right\|_2 \le   \sqrt{\sigma} \left( \sum_n | a_n|^2 \right)^{1/2}
\end{align}
for all complex sequence $(a_n) \in \ell^2(\Z)$,
where 
\begin{align}\label{c-f}
c_f = \sqrt{\int_{\T} f(t) (1-f(t)) d\m(t)}. 
\end{align}
Using Theorem 1.1 in Fan \cite{Fan2017ETDS}, together with (\ref{Riesz}), we deduce immediately that 
the random series $\sum a_n (\xi_n -\sigma)$ converges a.s. iff
$\sum |a_n|^2<\infty$.
Notice that $c_f=0$ iff $f$ is an indicator function. Also
notice that the RHS inequality in (\ref{Riesz}) always holds for any $f$. 

 Under the condition $c_f>0$ and $2\le p <\infty$, we also prove the following 
Khintchine-Kahane inequality (Theorem \ref{thm-KK}):
\begin{align}\label{KK-inquality}
\left\| \sum_{n} a_n  (\xi_n - \sigma) \right\|_2 \le \left\| \sum_{n} a_n  (\xi_n - \sigma) \right\|_p \le C_f(p)\left\| \sum_{n} a_n  (\xi_n - \sigma) \right\|_2,
\end{align} 
for all   $(a_n) \in \ell^2(\Z)$,  where 
$$
C_f(p) = \frac{\sqrt{2} e^{3/p} \Gamma(p+1)^{1/p}}{ \sqrt{\int_{\T} f(1-f) d\m}}.
$$

The above results show that although $\xi_n$'s are not independent, they do share many properties with independent 
variables.

\section{Recurrence, $L^2$-exactness and syndetic property }
\subsection{Recurrence and $L^2$-exactness}
Let $\Lambda = \{u_n\}_{n\ge 0}$ be a strictly increasing sequence of positive integers. $\Lambda$ is called a {\it Poincar\'e set} or $1$-{\it recurrent set} or simply {\it recurrent set} if for any measure-preserving system $(X, \mathcal{B}, \nu, T)$ and any $A \in \mathcal{B}$ with $\nu(A)>0$, there exists $n \in \Lambda$ such that 
\[
\nu(A \cap T^{-n} A) > 0.
\]
Following Fan-Schneider  \cite{Fan-Schneider}, $\Lambda$ is said to be $L^2$-exact if for any measure-preserving system $(X, \mathcal{B}, \nu, S)$ and any $g \in L^2(\nu)$, the following average
\[
A_N^\Lambda g (x): = \frac{1}{N} \sum_{n = 0}^{N-1}  g(T^{u_n} x) 
\] 
converges in $L^2(\nu)$ to $\E(g|\mathcal{J}_T)$, where $\E(\cdot| \mathcal{J}_T)$ denotes the conditional expectation with respect to the invariant $\sigma$-field $\mathcal{J}_T$ of $T$. 

Fan-Schneider showed in \cite[Theorem B]{Fan-Schneider} that any $L^2$-exact sequence is recurrent and  in \cite[Theorem 3.2]{Fan-Schneider} that  $\Lambda$ is $L^2$-exact if and only if the following average 
\begin{align}\label{wiener-av}
    \widehat{A}_N^\Lambda (t)  : = \frac{1}{\# (\Lambda \cap [0, N-1])}  \sum_{u \in \Lambda \cap [0, N-1] } e^{2 \pi i u t} 
\end{align}
converges to $0$ as $N \to \infty$ for all $t \in (0, 1)$. 
So, being $L^2$-exact is equivalent to being ergodic in the sense of 
\cite{Rosenblatt-Wierdl}. 


The following Theorem \ref{exactness} is a warm-up. A much stronger result, Theorem \ref{thm-erg},  will be proved. 

\begin{theorem}\label{exactness}
Let $\X$ be the determinantal point process induced by the kernel $K_f$ defined by \eqref{def-kernel}, associated to an arbitrary function $f: \T \to [0,1]$ such that $0<\sigma :=\int f d\m <1$.  Then almost surely,  $\X_{+}$ is $L^2$-exact and therefore is recurrent.  
\end{theorem}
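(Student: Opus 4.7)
The plan is to apply the spectral characterization of $L^2$-exactness from Fan-Schneider \cite[Theorem 3.2]{Fan-Schneider}: for a deterministic sequence $\Lambda\subset\N$, being $L^2$-exact is equivalent to the Wiener averages $\widehat{A}_N^\Lambda(t)$ in \eqref{wiener-av} tending to $0$ for every $t\in(0,1)$. Thus the task reduces to showing that, almost surely, this pointwise-in-$t$ convergence holds with $\Lambda=\X_+(\omega)$; that is, we must prove equation \eqref{eq-exactness} at every $t\in(0,1)$ on a single full-measure event. The recurrence statement then follows at no extra cost from \cite[Theorem B]{Fan-Schneider}.

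To handle the normalization $\#(\X_+\cap[0,N-1])$ appearing in $\widehat{A}_N^{\X_+}$, I would first invoke a SLLN for the DPP: using the covariance identity $\Cov(\xi_n,\xi_m)=\sigma(1-\sigma)\mathds{1}_{n=m}-|\widehat{f}(n-m)|^2\mathds{1}_{n\neq m}$ together with $\sum_k|\widehat{f}(k)|^2=\|f\|_2^2\le 1$, one has $\Var(\sum_{n<N}\xi_n)=O(N)$, whence Chebyshev along a power subsequence and Borel-Cantelli give $N^{-1}\#(\X_+\cap[0,N-1])\to\sigma>0$ a.s. After this, the decomposition
$$\frac{1}{N}\sum_{n=0}^{N-1}\xi_n e^{2\pi i nt}=\frac{M_N(t)}{N}+\sigma\cdot\frac{D_N(t)}{N},\quad M_N(t):=\sum_{n=0}^{N-1}(\xi_n-\sigma)e^{2\pi i n t},$$
isolates a deterministic Dirichlet kernel $D_N(t)=\sum_{n=0}^{N-1}e^{2\pi i nt}$ satisfying $D_N(t)/N\to 0$ pointwise on $(0,1)$, so the whole problem reduces to showing $M_N(t)/N\to 0$ for every $t\in(0,1)$, almost surely.

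For this final step the cleanest tool is the Salem-Littlewood type inequality \eqref{SL-DPP} from Theorem \ref{prop-quasi-G} with $P(x)=x$, which furnishes a \emph{uniform-in-$t$} bound $\sup_{t\in[0,1]}|M_N(t)|\le C_\omega\sqrt{N\log N}$ on a single full-measure event, yielding $\sup_t |M_N(t)/N|\to 0$ a.s. The main obstacle is precisely this requirement of a single null event valid for every $t$: a direct Chebyshev computation gives $\E|M_N(t)|^2=N\sigma(1-\sigma)-\sum_{n\ne m}|\widehat{f}(n-m)|^2 e^{2\pi i(n-m)t}=O(N)$ uniformly in $t$ from the same covariance identity, which is enough for a.s.\ convergence at each fixed $t$ via a geometric-subsequence argument, but the modulus-of-continuity estimate $|M_N(t)-M_N(s)|=O(N^2|t-s|)$ available for a trigonometric polynomial of degree $N$ is too crude to interpolate between rationals and cover all irrational $t$ by countable union. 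Invoking \eqref{SL-DPP} bypasses this difficulty and turns Theorem \ref{exactness} into a short corollary of Theorem \ref{prop-quasi-G}.
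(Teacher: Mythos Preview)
Your proposal is correct, but the route differs from the paper's. Both arguments start with the Fan--Schneider spectral criterion and the same decomposition into the centered sum $M_N(t)$ plus the Dirichlet term, and both recognise that the difficulty is obtaining a single full-measure event valid for all $t\in(0,1)$. The divergence is in how this uniformity is secured. The paper treats Theorem~\ref{exactness} as a warm-up independent of Section~3: it views $\xi_n-\sigma=g(S^n\xi)$ for the shift system $(\{0,1\}^\Z,\nu_f,S)$, notes that this system is strongly mixing (by Lyons--Steif), so $g$ is orthogonal to the Kronecker factor, and then invokes Bourgain's uniform Wiener--Wintner theorem to get $\sup_t|M_N(t)|/N\to 0$ a.s. Your approach instead forward-references Theorem~\ref{prop-quasi-G} (the Salem--Littlewood bound for DPPs), obtaining the sharper quantitative estimate $\sup_t|M_N(t)|\le C_\omega\sqrt{N\log N}$ and reducing Theorem~\ref{exactness} to a corollary of the later machinery. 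A minor further difference: for the density $\#(\X_+\cap[0,N-1])/N\to\sigma$, the paper simply applies Birkhoff's ergodic theorem on the mixing shift, whereas you give a direct second-moment argument via the explicit covariance $\Cov(\xi_n,\xi_m)=-|\widehat{f}(n-m)|^2$ for $n\ne m$. Your route is more self-contained in the probabilistic sense (no ergodic black boxes beyond Borel--Cantelli) but sacrifices the logical independence of Theorem~\ref{exactness} from the sub-Gaussian estimates; the paper's route keeps Section~2 free-standing at the cost of importing Bourgain's theorem.
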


\begin{proof}
Recall that we identify $\X$ with a random element $\xi =(\xi_n)_{n\in\Z}\in \{0, 1\}^\Z$ and the distribution of $\xi$ is denoted by $\nu_f$. Under this identification, we may write
\[
    \widehat{A}_N^{\X_{+}}(t)  =  \frac{1}{\# (\X \cap [0, N-1])}   \sum_{n=0}^{N-1} e^{2 \pi i  n t}  \xi_n.
\]

Consider the measure-preserving system $(\{0, 1\}^\Z, \nu_f, S)$ where 
 $S$ is the usual shift operator.  This dynamical system is strongly mixing (it is even conjugate to a Bernoulli shift, see Lyons-Steif \cite[Theorem 3.1]{Lyons-stationary}). Hence, by Birkhoff ergodic theorem, for $\nu_f$-almost all $\xi$ we have 
\begin{equation}\label{density}
\lim_{N\to\infty}\frac{\# (\X \cap [0, N-1])}{N}   = \lim_{N\to\infty}\frac{1}{N} \sum_{n=0}^{N-1} \pi_0(S^n \xi) = \sigma >0,
\end{equation}
where $\pi_0: \{0, 1\}^\Z \rightarrow \{0,1\}$ is the projection to the $0$-th coordinate. That means $\sigma$ is the density of $\X_+$.
Therefore, to show the $L^2$-exactness of $\X_{+}$, it suffices to show that
\begin{align}\label{N-sigma}
\text{$\nu_f$-a.e. $\xi \in \{0, 1\}^\Z$},  \quad \lim_{N\to\infty}  \frac{1}{N}   \sum_{n=0}^{N-1} e^{2 \pi i  n t}     \pi_0 (S^n \xi) =  0  \text{\, for all $t \in (0, 1)$.}
\end{align}

Set 
\[
g(\xi): = \xi_0  - \E(\xi_0).
\]
We have  only to prove
\begin{align}\label{W-W}
a.e. \ \ \ \lim_{N\to\infty}\sup_{t}  \left| \frac{1}{N} \sum_{n=0}^{N-1} e^{2 \pi i  n t}   g(S^n \xi)\right| = 0.
\end{align}

As we have mentioned above, the measure preserving system $(\{0, 1\}^\Z, \nu_f, S)$ is ergodic and even  strongly mixing \cite[Theorem 3.1]{Lyons-stationary}.  Then the Kronecker factor of the system 
consists only of constant functions.
Since   $\int g d\nu_f = 0$, namely $g$ is orthogonal to the Kronecker factor,
we may apply the Bourgain's uniform Wiener-Wintner ergodic theorem  \cite{Bourgain1990} to obtain the desired \eqref{W-W}.
To finish the proof, we  write 
\[
\frac{1}{N} \sum_{n=0}^{N-1} e^{2 \pi i  n t}  \xi_n =   \frac{1}{N} \sum_{n=0}^{N-1} e^{2 \pi i  n t}   g (S^n \xi) +  \frac{\sigma}{N} \sum_{n=0}^{N-1} e^{2 \pi i  n t}.
\]
It is clear that the desired  result \eqref{N-sigma} follows from  \eqref{W-W} and the simple fact
\[
\lim_{N\to \infty}  \frac{1}{N} \sum_{n=0}^{N-1} e^{2 \pi i  n t} =0 \text{\, for all $t \in (0, 1).$}
\]
\end{proof}


Not only $g$ is orthogonal to the Kronecker factor, but also
its spectral measure is absolutely continuous with respect to Lebesgue measure.  
Recall that the spectral measure $\sigma_{g}$ of $g$ is the unique measure on $\T$ such that its Fourier coefficients are given by 
\[
\widehat{\sigma}_{g} (n) = \int g \circ S^n \cdot \overline{g} d\nu \quad \forall n \in \Z. \]
In other words, we have $\widehat{\sigma}_{g} (n)  = \Cov(\xi_{n}, \xi_0)$.
Since the system $(\{0, 1\}^\Z, \nu_f, S)$ is mixing, $\int g d\nu_f=0$
implies that the spectral measure $\sigma_g$ is continuous, i.e. without atoms. We have more than that.

\begin{proposition} For $g = \xi_0 - \mathbb{E}\xi_0$, the spectral
	measure $\sigma_g$ is absulutely continuous and $\frac{d\sigma_g}{d\m}
	\in A(\T)$, where $A(\T)$ is the space of  continuous functions on $\T$ whose Fourier coefficients are summable.
	\end{proposition}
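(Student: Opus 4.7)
The plan is to compute $\widehat{\sigma}_g$ explicitly from the determinantal structure and show that it is absolutely summable as a function of $n\in\Z$, which is well known to force both absolute continuity of $\sigma_g$ and $d\sigma_g/d\m \in A(\T)$. First I would use the two-point correlation of the DPP to get, for $n\neq 0$,
\begin{equation*}
\widehat{\sigma}_g(n)=\Cov(\xi_n,\xi_0)=\det\begin{pmatrix} K_f(n,n) & K_f(n,0)\\ K_f(0,n) & K_f(0,0)\end{pmatrix}-\sigma^2=-\widehat{f}(n)\widehat{f}(-n)=-|\widehat{f}(n)|^2,
\end{equation*}
using that $f$ is real so $\widehat{f}(-n)=\overline{\widehat{f}(n)}$. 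Meanwhile $\widehat{\sigma}_g(0)=\Var(\xi_0)=\sigma(1-\sigma)$.

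The key estimate is then Parseval together with the pointwise bound $0\le f\le 1$, which gives $f^2\le f$ and hence
\begin{equation*}
\sum_{n\in\Z}|\widehat{f}(n)|^2=\int_\T f^2\,d\m\le \int_\T f\,d\m=\sigma.
\end{equation*}
Combining,
\begin{equation*}
\sum_{n\in\Z}|\widehat{\sigma}_g(n)|=\sigma(1-\sigma)+\sum_{n\neq 0}|\widehat{f}(n)|^2\le \sigma(1-\sigma)+(\sigma-\sigma^2)=2\sigma(1-\sigma)<\infty.
\end{equation*}
Since a Borel measure on $\T$ whose Fourier coefficients are absolutely summable is necessarily absolutely continuous with continuous density given by the sum of its Fourier series, it follows that $\sigma_g\ll d\m$ and $d\sigma_g/d\m\in A(\T)$.

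Finally, I would identify the density in closed form: summing the Fourier series and recognizing that $|\widehat{f}(n)|^2$ are the Fourier coefficients of $f*\check{f}$ with $\check{f}(t)=f(-t)$,
\begin{equation*}
\frac{d\sigma_g}{d\m}(t)=\sigma-(f*\check{f})(t)=\int_\T f(s)\bigl(1-f(s-t)\bigr)\,d\m(s),
\end{equation*}
which is manifestly non-negative, as it should be, and reduces at $t=0$ to $c_f^2$ from \eqref{c-f}. There is no real obstacle here; the only observation of substance is the inequality $\|f\|_2^2\le\sigma$ coming from $0\le f\le 1$, which is precisely what distinguishes determinantal kernels indexed by $[0,1]$-valued symbols from the general positive-definite case.
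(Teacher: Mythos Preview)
Your proof is correct and follows essentially the same approach as the paper: compute $\widehat{\sigma}_g(n)=-|\widehat{f}(n)|^2$ for $n\neq 0$ from the two-point determinantal correlations, then invoke square-summability of $\widehat{f}$ to conclude $\widehat{\sigma}_g\in\ell^1(\Z)$ and hence $d\sigma_g/d\m\in A(\T)$. Your explicit bound $\sum_n|\widehat{\sigma}_g(n)|\le 2\sigma(1-\sigma)$ via $f^2\le f$ and the closed-form identification of the density as $\sigma-(f*\check f)$ are pleasant additions not present in the paper, but note that the paper only needs $f\in L^2$ (from $f\in L^\infty$) rather than the sharper inequality $\|f\|_2^2\le\sigma$, so the latter is not really ``the only observation of substance'' here.
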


\begin{proof}
For $n \ne 0$, using the determinantal structure, we have 
\begin{align*}
\E(\xi_{n}\xi_0) =   \left| \begin{array}{cc}  \widehat{f}(0-0) &  \widehat{f} (0-n) \\  \widehat{f}(n-0) &  \widehat{f}(n-n) \end{array} \right|  =  \widehat{f}(0)^2 - | \widehat{f}(n)|^2.
\end{align*}
Combining this with the equality $\E(\xi_n) = \E(\xi_0) = \widehat{f}(0)$, we obtain 
\[
\widehat{\sigma}_{g} (n)   = \E(\xi_n \xi_0) - \E(\xi_n)\E(\xi_0)=- | \widehat{f}(n)|^2  \text{\, for all $n\ne 0$}.
\]
Consequently, since $f \in L^\infty(\T) \subset L^2(\T)$, we have
\[
\sum_{n\in \Z} |\widehat{ \sigma_g}(n)|  =  | \widehat{\sigma_g}(0)| + 2 \sum_{n=1}^\infty | \widehat{f}(n)|^2 < \infty.
\]
This  implies that $\sigma_g$ is absolutely continuous with respect to $\m$ and
\[
\frac{d\sigma_g}{d\m}   (e^{2 \pi i t}) = \sum_{n\in \Z} \widehat{\sigma_g}(n) e^{2 \pi i n t} \in A(\T).
\]
\end{proof}

\subsection{$\X$ is not syndetic}

\begin{theorem}\label{prop-syndetic}
Almost surely,  $\X_{+}$ is not syndetic.  
\end{theorem}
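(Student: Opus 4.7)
Write $\X_+$ as the increasing sequence of its elements. Since $\X_+$ is not syndetic iff it contains arbitrarily large gaps, the goal is: almost surely, for every $L\ge 1$ there exist infinitely many $a\ge 0$ with $\X\cap [a,a+L-1]=\emptyset$. By a countable intersection it suffices, for each fixed $L$, to produce almost surely infinitely many such $a$.

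\textbf{Step 1: positive probability of an empty window.} The first and main step is to prove
\[
p_L:=\PP\bigl(\X\cap [0,L-1]=\emptyset\bigr)>0\qquad \text{for every }L\ge 1.
\]
For a determinantal point process with kernel $K$, the gap probability on a finite set $A\subset \Z$ is given by the Fredholm determinant
\[
\PP(\X\cap A=\emptyset)=\det\bigl(I-K_A\bigr),
\]
where $K_A=(K_f(n,m))_{n,m\in A}=(\widehat f(n-m))_{n,m\in A}$ is the finite Toeplitz matrix (see \cite{Lyons-stationary}). Taking $A=\{0,1,\dots,L-1\}$, I need to show that the $L\times L$ Toeplitz matrix $T_L:=(\widehat f(n-m))_{0\le n,m\le L-1}$ has no eigenvalue equal to $1$. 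Since $T_L$ is a principal submatrix of the bounded operator $K_f$ on $\ell^2(\Z)$ with $0\le K_f\le I$, one has $0\le T_L\le I$. If $T_L v=v$ for some $v=(a_0,\dots,a_{L-1})\ne 0$, then writing $Q(t)=\sum_{n=0}^{L-1}a_n e^{2\pi i n t}$,
\[
\|v\|^2=\langle T_L v,v\rangle=\int_{\T} f(t)\,|Q(t)|^2\,d\m(t)=\int_{\T}|Q(t)|^2\,d\m(t),
\]
which forces $\int_{\T}(1-f)|Q|^2\,d\m=0$. Because $\int(1-f)\,d\m=1-\sigma>0$, the set $\{f<1\}$ has positive Lebesgue measure, while $Q$, being a nonzero trigonometric polynomial, has only finitely many zeros. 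This is a contradiction, so $\det(I-T_L)>0$ and hence $p_L>0$.

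\textbf{Step 2: from positive probability to infinitely many empty windows.} Let $S$ denote the shift on $\{0,1\}^\Z$ and recall that $(\{0,1\}^\Z,\nu_f,S)$ is ergodic (in fact mixing) by \cite[Theorem 3.1]{Lyons-stationary}. Let
\[
E_L=\{\xi\in\{0,1\}^\Z:\xi_0=\xi_1=\cdots=\xi_{L-1}=0\},
\]
so $\nu_f(E_L)=p_L>0$. By Birkhoff's ergodic theorem applied to $\mathbf 1_{E_L}$,
\[
\text{$\nu_f$-a.s.,}\qquad \lim_{N\to\infty}\frac{1}{N}\sum_{a=0}^{N-1}\mathbf 1_{E_L}(S^a\xi)=p_L>0,
\]
so almost surely there are infinitely many $a\ge 0$ with $S^a\xi\in E_L$, i.e.\ with $\X\cap[a,a+L-1]=\emptyset$. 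Call this full-measure event $\Omega_L$.

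\textbf{Step 3: conclusion.} On $\bigcap_{L\ge 1}\Omega_L$, which has probability $1$, the set $\X_+$ contains gaps of every size, hence is not syndetic.

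The only genuinely nontrivial point is Step~1: the Fredholm gap-probability identity for DPPs combined with the observation that $T_L$ cannot have eigenvalue $1$ precisely because $\sigma<1$. Everything else is a routine application of ergodicity.
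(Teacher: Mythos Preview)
Your proof is correct and follows essentially the same two-step strategy as the paper: first establish that the gap probability $\PP(\X\cap[0,L-1]=\emptyset)=\det(I-T_L)>0$ by showing $T_L$ has no eigenvalue~$1$, then use ergodicity of the shift to upgrade positive probability to infinitely many occurrences. Your eigenvalue argument in Step~1 (getting $\int(1-f)|Q|^2=0$ directly from $\langle T_Lv,v\rangle=\|v\|^2$) is in fact slightly cleaner than the paper's route via the projection identity $P=\Pi_\ell(fP)$ and $\int(1-f^2)|P|^2=0$, and in Step~2 you apply Birkhoff with $S$ itself rather than $S^\ell$ as the paper does, but these are cosmetic differences.
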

For the proof of Theorem \ref{prop-syndetic}, we will need the following elementary fact \eqref{pp-gap} about the gap probability of $\X_+$. Note that the positivity \eqref{pp-gap} follows immediately from Lyons \cite[Theorem 4.2]{Lyons-stationary}. To be self-containing, we present here an alternative proof of \eqref{pp-gap}.

\begin{lemma}\label{lem-contractive}
For any positive integer $\ell \ge 1$,  the positive contractive matrix 
$$
 \mathds{1}_{\{0, \cdots, \ell-1 \}} K_f \mathds{1}_{\{0, \cdots, \ell-1\}} = [\widehat{f}(n-m)]_{0\le n, m \le \ell -1}
$$
is strictly contractive. Consequently, we have  
\begin{align}\label{pp-gap}
\mathbb{P}(\X \cap \{0, 1, \cdots, \ell-1 \} =\emptyset)  = \det (1 -  \mathds{1}_{\{0, \cdots, \ell-1 \}} K_f \mathds{1}_{\{0, \cdots, \ell-1\}} ) > 0. 
\end{align}
\end{lemma}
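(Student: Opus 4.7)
The plan is to prove both parts of the lemma by reducing the quadratic form of $M_\ell := [\widehat{f}(n-m)]_{0\le n,m\le \ell-1}$ to an integral involving a non-vanishing trigonometric polynomial and exploiting the hypothesis $\sigma < 1$.

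First I would observe that for any $a = (a_0, \ldots, a_{\ell-1}) \in \C^\ell$, setting $P(t) := \sum_{n=0}^{\ell-1} a_n e^{2\pi i n t}$ and expanding the definition of $\widehat{f}$ gives the identity
\[
\langle M_\ell a, a \rangle_{\C^\ell} = \sum_{n,m=0}^{\ell-1} \widehat{f}(n-m) a_m \overline{a_n} = \int_{\T} f(t) |P(t)|^2 \, d\m(t),
\]
which already shows $0 \le M_\ell \le I$ since $0 \le f \le 1$ and $\int_\T |P|^2 d\m = \|a\|_{\C^\ell}^2$ by Parseval. Subtracting,
\[
\|a\|_{\C^\ell}^2 - \langle M_\ell a, a \rangle_{\C^\ell} = \int_{\T} (1 - f(t)) |P(t)|^2 \, d\m(t).
\]

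For the strict contractivity (part 1) I would argue that for $a \neq 0$ this integral is strictly positive. The function $P$, being a non-trivial trigonometric polynomial of degree $\le \ell - 1$, has at most finitely many zeros on $\T$, so $\{t : P(t) \neq 0\}$ is conull. On the other hand, the hypothesis $\sigma = \int f \, d\m < 1$ forces the set $\{t : f(t) < 1\}$ to have positive Lebesgue measure. The intersection therefore has positive measure, and on it the integrand $(1-f)|P|^2$ is strictly positive. A compactness argument on the finite-dimensional unit sphere then upgrades this to $\|M_\ell\|_{\mathrm{op}} < 1$, i.e.\ all eigenvalues of $M_\ell$ lie in $[0, 1)$.

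For (\ref{pp-gap}), I would invoke the standard void-probability formula for determinantal point processes with a self-adjoint locally trace-class contraction kernel: for any finite $\Lambda \subset \Z$,
\[
\PP(\X \cap \Lambda = \emptyset) = \Det\bigl(I - \ch_\Lambda K_f \ch_\Lambda \bigr),
\]
applied with $\Lambda = \{0, 1, \ldots, \ell - 1\}$ (see e.g.\ Lyons--Steif, or derive it by Fredholm expansion from the correlation functions \eqref{def-DPP-bis} via inclusion--exclusion). Combined with part 1, the eigenvalues $\lambda_1, \ldots, \lambda_\ell$ of $M_\ell$ satisfy $0 \le \lambda_i < 1$, and hence
\[
\Det(I - M_\ell) = \prod_{i=1}^\ell (1 - \lambda_i) > 0.
\]

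The only nontrivial step is the strict-contractivity argument, and the subtlety there is ensuring the integrand $(1-f)|P|^2$ is nontrivial; everything else is either an identity via Parseval or a direct appeal to standard DPP machinery already cited in the paper.
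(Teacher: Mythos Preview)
Your proof is correct and follows essentially the same approach as the paper: both reduce the compressed operator to the action of $M_f$ on trigonometric polynomials (the paper via a commutative diagram and contradiction on a putative eigenvector with eigenvalue $1$, you via the direct quadratic-form identity $\langle M_\ell a,a\rangle=\int f|P|^2\,d\m$), and both conclude by combining the fact that a nonzero trigonometric polynomial vanishes only on a null set with $\m(\{f<1\})>0$. Your direct computation is a mild streamlining of the paper's argument, but the mathematical content is the same.
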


\begin{proof} Consider the finite-dimensional subspace of trigonometric polynomials 
$$
  \mathscr{P}_\ell: = \{ P \in L^2(\T) : P (t) = \sum_{n=0}^{\ell-1} a_n e^{2 \pi i n t}, a_n \in \C \}
$$
and let $\Pi_\ell$ be the orthogonal projection $\Pi_\ell: L^2(\T) \rightarrow \mathscr{P}_\ell$. 
By the following commutative diagram 
$$
\begin{CD}
L^2(\T) @> \quad M_f \quad >> L^2(\T)  \\
@V{\mathscr{F}}VV @VV {\mathscr{F}}V \\
\ell^2(\Z) @> K_f>> \ell^2(\Z),
\end{CD}
$$
where $M_f$ denotes the operator of multiplication by $f$ and $\mathscr{F}$ denotes the Fourier transform, we obtain 
$$
K_f  = \mathscr{F} \circ M_f  \circ \mathscr{F}^{-1}.
$$
Therefore, by using the identity $\mathscr{F}^{-1} \mathds{1}_{\{0, \cdots, \ell-1\}} \mathscr{F} = \Pi_\ell$, we get
\begin{align*}
 \mathds{1}_{\{0, \cdots, \ell-1 \}} K_f \mathds{1}_{\{0, \cdots, \ell-1\}}  & =  \mathds{1}_{\{0, \cdots, \ell-1 \}}   \mathscr{F} \circ M_f  \circ \mathscr{F}^{-1}  \mathds{1}_{\{0, \cdots, \ell-1\}} 
 \\
 & = \mathscr{F} \circ \Pi_\ell \circ M_f \circ \Pi_\ell \circ \mathscr{F}^{-1}.
\end{align*}
We now argue by contradiction. Suppose that  $\mathds{1}_{\{0, \cdots, \ell-1 \}} K_f \mathds{1}_{\{0, \cdots, \ell-1\}}$ has operator norm equal to $1$.  Then the operator $\Pi_\ell \circ M_f \circ \Pi_\ell $ has operator norm equals to $1$. Since  $\Pi_\ell \circ M_f \circ \Pi_\ell$ acts on the finite dimensional space $\mathscr{P}_\ell$ and is a positive operator, there exists a $P\in \mathscr{P}_\ell\setminus \{0\}$ such that $\Pi_\ell \circ M_f \circ \Pi_\ell  (P) = P$. That is, $P = \Pi_\ell (f P)$. Thus by recalling that $0 \le f \le 1$, we have 
\begin{align*}
\int_\T |P|^2 d\m = \int_\T | \Pi_\ell(f P) |^2 d\m \le  \int_\T | f P |^2 d\m \le \int_\T | P|^2 d\m. 
\end{align*}
Therefore, 
$$
\int_\T (1 -f^2)  | P|^2 d\m = 0.
$$
Consequently  $(1 - f^2) |P|^2 =0$ a.e., then 
$f = 1$ a.e., which contradicts the assumption \eqref{def-sigma}.
The last assertion of the lemma follows immediately. 
\end{proof}

\begin{proof}[Proof of Theorem \ref{prop-syndetic}]
 We will follow \cite{Fan-Schneider} (the second part of the proof of Proposition 6.3 in \cite{Fan-Schneider}). 

Fix $\ell>1$ and $0\le r <\ell$. Consider the random variables
   $$
         Z_{n, \ell} = \mathds{1}_{\{ \X \cap B_{n, \ell}=\emptyset\}}, \quad \forall n \ge 0
   $$
   where $B_{n, \ell} =  [r +n\ell, r +n\ell +\ell -1] \cap \N$, the interval in $\mathbb{N}$ containing $\ell$ consecutive integers with $r +n\ell$ as the starting integer. The variable $Z_{n, \ell}$ describes a gap event, i.e. no point in the interval $B_{n, \ell}$. 
    First notice that
   $$
        \mathbb{E} Z_{n, \ell} = \mathbb{P}(\X \cap B_{n, \ell} =\emptyset) = \det (1 -  \mathds{1}_{B_{n, \ell}} K_f \mathds{1}_{B_{n, \ell}} ) . 
   $$
   This gap probability is independent of  $n$ because of the stationarity of the point process $\X$. Therefore, by Lemma \ref{lem-contractive}, 
    $$
        \mathbb{E} Z_{n, \ell}  = \det (1 -  \mathds{1}_{\{0, \cdots, \ell-1 \}} K_f \mathds{1}_{\{0, \cdots, \ell-1\}} ) > 0.
   $$
   
   Now, we consider  the shift dynamics $(\{0, 1\}^\Z, \nu_f, S)$ associated to $\X$. Note that the dynamical system $(\{0, 1\}^\Z, \nu_f, S^\ell)$ is strongly mixing then totally ergodic, since $(\{0, 1\}^\Z, \nu_f, S)$ is strongly mixing. By the ergodic theorem,  we have the following law of large numbers
   $$
        a.s.  \quad  \lim_{N\to \infty} \frac{1}{N} \sum_{n=1}^N Z_{n, \ell} =\lim_{N\to \infty} \frac{1}{N} \sum_{n=1}^N \phi(S^{\ell n} \xi) = \E Z_{1, \ell}> 0,
   $$
   where $\phi$ is the indicator function of  the cylinder $[0, \cdots,0]_r^{r+\ell -1}$ defined by  $\{\xi\in \{0,1\}^{\Z}: \xi_j=0   \text{ for } r\leq j\leq r+\ell-1 \}$.
   Since $Z_{n, \ell}$'s take value in $\{0, 1\}$, we must have $$\limsup_n Z_{n, \ell} =1 \ \ \  a.s..$$  Then, almost surely,  for any $\ell$ there exists infinitely many $n$'s such that 
   $$
   \X \cap B_{n, \ell} = \emptyset.
   $$
   Since $\ell$ may be arbitrarily large, we obtain that  almost surely, $\X$ is not syndetic. 
      \end{proof}
  
  \subsection{Some remarks} Among the DPPs we are considering, there is the
  $p$-Bernoulli random sequence, as we have already mentioned in Introduction. This $p$-Bernoulli sequence can be generalized in the following way. Consider a sequence of positive numbers $(p_n) \subset (0, 1)$ such that $\sum p_n =\infty$ and a sequence of independent random variables $(\xi_n)$ such that 
  $$P(\xi_n = 1) =p_n = 1- P(\xi_n =0).$$  
  Then consider the infinite random subset of integers
  $$
      \Lambda(\omega) = \{n \ge 1: \xi_n =1\}.
  $$
  
  For this random sequence, the following almost sure facts are known:
  \begin{itemize} 
  	     \item $\# (\Lambda(\omega)\cap [1, N]) \sim \sum_{n=1}^N p_n$ as $N \to \infty$ (see \cite[Proposition 6.2]{Fan-Schneider}). So, $\Lambda(\omega)$
  	     has a positive density $\sigma >0$ iff $ \sum_{n=1}^N p_n \sim \sigma N$ as $N\to \infty$. But in many cases, $\Lambda(\omega)$
  	     has zero density.
  	     \item Kahane and Katznelson \cite[Th\'eor\`eme 1]{KK2008a}  proved that if $n p_n =O(1)$, $\Lambda(\omega)$ is a Sidon set and is discrete and non-dense in the Bohr group, and the closure of 
  	     $\Lambda(\omega)$ in the Bohr group has zero Bohr-Haar measure.  In particular, almost surely
  	     there exist $t = t(\omega) \in (0, 1)$ such that
  	     $$
  	          \lim_{N\to\infty} \frac{\sum_{n=1}^N \xi_n e^{2\pi i n t}}{\sum_{n=1}^N \xi_n} \not= 0.
  	     $$
  	     Thus $\Lambda(\omega)$ is not $L^2$-exact.
  	     \item But Kahane and Katznelson \cite[Th\'eor\`eme B]{KK2008b} 
  	     proved the following opposite result.
  	     If $\lim n p_n =\infty$ and $n p_n$ is slowly varying
  	     (namely, for any $\delta >0$, $n^{1+\delta}p_n$ is increasing and
  	     $n^{1-\delta}p_n$ is decreasing for large $n$), then $\Lambda(\omega)$ is  $L^2$-exact. Fan and Schneider \cite[Theorem 6.1]{Fan-Schneider}   proved the same conclusion under the following simpler conditions
  	     $$
  	     \log N = o(\sum_{n=1}^N p_n), \qquad \sum_{n=1}^N |p_n -p_{n+1}| = o(\sum_{n=1}^N p_n).$$
  	      Notice that the case $p_{2n}=1$ and $p_{2n+1}=0$ is too fluctuant 
  	     and produces the deterministic set  $\Lambda (\omega) = 2 \N$, which is not $L^2$-exact. The above second condition restricts the fluctuation of $(p_n)$. 
  	     \item In \cite[Theorem 6.3]{Fan-Schneider}, it is proved that $\Lambda(\omega)$ is syndedic iff 
  	     $$
  	         \sum_{n=1}^\infty \prod_{j=0}^{\ell-1} (1-p_{n+j}) <\infty.
  	     $$
  	     \item Bourgain \cite[Proposition 8.2]{Bourgain1988} proved that when $p_n = n^{-1}(\log \log n)^B$
  	     with $B > (p-1)^{-1}$, $\Lambda(\omega)$ is good universal for almost
  	     every convergence for functions in $L^p$ ($p>1$).
  	     See Boshernitzan \cite{Boshernitzan1983} for related works.
  	\end{itemize} 
  
  In the following section, we will prove that our $\X_+$ is always
  good unversal for almost everywhere convergence for functions in $L^1$.
  Actually we can deal with polynomial images $P(\X_+)$.

      \section{Pointwise ergodic theorem}
      Let $1\le r\le \infty$. A sequence $\Lambda:=(u_n)_{n\ge 0} \subset \mathbb{N}$ is said to be  {\em good universal  for almost everywhere convergence in $L^r$} if for any measure-preserving dynamical system
      $(X, \mathcal{B}, \nu, T)$ and any $f \in L^{r}(\nu)$, the limit
      $$
           \lim_{N\to \infty}\frac{1}{N}\sum_{k=0}^{N-1} f(T^{u_k} x)
      $$
      exists for $\nu$-almost every $x \in X$. When $(u_k)$ admits a positive density, the above limit in the defintion can be replaced by 
       $$
           \lim_{N\to \infty}\frac{1}{\# (\Lambda \cap [0, N])}\sum_{ u \in \Lambda \cap [0, N]} f(T^{u} x). 
      $$
      \medskip
      
      Set  $\X_+(\omega) = \{u_1(\omega), u_2(\omega), \cdots\}$ with $u_n(\omega)<u_{n+1}(\omega)$ ($\forall n$).  In this section, we will prove 
      that almost surely,  for any   integral polynomial $P \in \Z[x]$
      with $\deg P\ge 1$ verifying  $P(\N) \subset \N$, the sequence $n \mapsto P(u_n(\omega))$ is a good universal  for almost everywhere convergence in $L^r$ for any $r>1$.
      It is also true for $r=1$ when $\deg P =1$.   The following theorem tells a little bit more, because we can distinguish the limit, see (\ref{good-lim}). Recall that, in the following, $\xi_n$ is the random variable
      which takes the value $1$ or $0$ according to $n \in \X_+$ or $n\not\in \X_+$. Recall that $\sigma= \mathbb{E} \xi_n  = \int f(x)dx$ where $f: \mathbb{T}\to [0,1]$ is the function 
      defining the determinantal point process $\X$. 
      
      \begin{theorem}\label{thm-erg}
         There exists an event $\Omega_0$
      with $\mathbb{P}(\Omega_0)=1$ such that each $\omega\in \Omega_0$ has the following property:  for any   polynomial $P \in \Z[X]$ with $\deg P\ge 1$ verifying  $P(\N) \subset \N$, any measure-preserving  dynamical system $(X, \mathcal{B}, \nu, T)$ and any  $g \in L^r(X, \mathcal{B}, \nu)$ ($r>1$),
       the limit
      \begin{equation}\label{good-Lr}
          \lim_{N \to \infty}  \frac{1}{ N} \sum_{n = 0}^{N-1}   (\xi_n (\omega) -  \sigma) g(T^{P(n)} x) =0
      \end{equation}
      holds for $\nu$-almost every $x\in X$ and in  $L^r$-norm. 
      \end{theorem}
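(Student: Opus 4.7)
The plan is to reduce Theorem \ref{thm-erg} to the Salem--Littlewood type bound \eqref{SL-DPP} (Theorem \ref{prop-quasi-G}) and then feed the resulting weight into Fan's oscillation criterion \cite[Theorem 2]{Fan-Osc}, which was specifically designed to turn polynomial Fourier decay into pointwise and norm convergence of weighted polynomial ergodic averages.

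First I would fix a countable enumeration of the class
\[
\mathcal{P}:=\{P\in\Z[x]:\deg P\ge 1,\ P(\N)\subset\N\}.
\]
For each individual $P\in\mathcal{P}$, the inequality \eqref{SL-DPP} produces a full-probability event $\Omega_P$ on which
\[
\max_{0\le t\le 1}\Bigl|\sum_{n=0}^{N-1}(\xi_n(\omega)-\sigma)\,e^{2\pi i P(n)t}\Bigr|\le C_{\omega,P}\sqrt{(\deg P)\,N\log N}
\]
for all $N\ge 2$. Setting $\Omega_0:=\bigcap_{P\in\mathcal{P}}\Omega_P$, one obtains a single event of full probability on which the above estimate holds uniformly in $P$; this is essentially the only use of determinantal structure in the argument.

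For each $\omega\in\Omega_0$ I would then apply \cite[Theorem 2]{Fan-Osc} to the bounded sequence $w_n:=\xi_n(\omega)-\sigma$. That theorem asserts that whenever a bounded sequence $(w_n)$ satisfies $\sup_t\bigl|\sum_{n=0}^{N-1}w_n e^{2\pi i P(n)t}\bigr|=o(N)$, it is a good weight for the polynomial pointwise ergodic theorem along $P$: for every measure-preserving system $(X,\mathcal{B},\nu,T)$ and every $g\in L^r(\nu)$ with $r>1$, the averages $\frac{1}{N}\sum_{n=0}^{N-1}w_n g(T^{P(n)}x)$ converge to zero $\nu$-a.e.\ and in $L^r$. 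Our bound $O(\sqrt{N\log N})$ sits comfortably inside the $o(N)$ window, so the hypotheses are satisfied on $\Omega_0$ for every $P\in\mathcal{P}$, and \eqref{good-Lr} follows. The linear case $\deg P=1$, $r=1$, comes alternatively from Bourgain's return time theorem \cite{BFKO1989} applied to the underlying mixing shift $(\{0,1\}^\Z,\nu_f,S)$, which handles the endpoint $r=1$ that \cite[Theorem 2]{Fan-Osc} does not cover.

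The principal obstacle in this scheme is not the ergodic theory but the Salem--Littlewood estimate itself: the whole argument hinges on upgrading the sub-Gaussian inequality \eqref{inq-qG} to a uniform estimate over $t\in[0,1]$ for the polynomial trigonometric sum $\sum(\xi_n-\sigma)e^{2\pi i P(n)t}$, despite the $\xi_n$'s being correlated. Once \eqref{SL-DPP} is in hand, the passage from a Fourier bound of the right order to a good-universality statement is essentially mechanical via \cite[Theorem 2]{Fan-Osc}, and the countable enumeration of $\mathcal{P}$ ensures that the same exceptional null event works for all polynomials simultaneously.
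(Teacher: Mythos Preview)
Your plan is essentially the paper's own proof: reduce to a fixed polynomial by countability, invoke Theorem~\ref{prop-quasi-G} to get the Salem--Littlewood bound on a full-measure event, and then feed $w_n=\xi_n(\omega)-\sigma$ into Fan's oscillation criterion from \cite{Fan-Osc}.

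One point to tighten: you state the hypothesis of Fan's result as merely $\sup_t\bigl|\sum_{n<N}w_n e^{2\pi i P(n)t}\bigr|=o(N)$. The version the paper actually uses (recorded as Proposition~\ref{Fan}, from \cite[Corollary~2]{Fan-Osc}) requires the quantitative rate
\[
\max_{0\le t\le 1}\Bigl|\sum_{n=0}^{N-1} w_n e^{2\pi i P(n)t}\Bigr|\le C\,\frac{N}{(\log N)^{1/2+\varepsilon}},
\]
not just $o(N)$; a bare $o(N)$ bound is typically not enough to push through the maximal-inequality machinery behind pointwise convergence. This does not damage your argument, since $C_{\omega,P}\sqrt{dN\log N}$ is far below $N/(\log N)^{1/2+\varepsilon}$, but you should quote the correct hypothesis.
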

      
      We can rewrite (\ref{good-Lr}) as follows
      \begin{equation}\label{good-lim}
           \lim_{N \to \infty}  \frac{1}{ N} \sum_{n = 0}^{N-1}   \xi_n (\omega) g(T^{P(n)} x) =  \sigma   \lim_{N \to \infty}  \frac{1}{ N} \sum_{n = 0}^{N-1}  g(T^{P(n)} x). 
       \end{equation}
      These two limits exist and the existence of the limit on the RHS is ensured by Bourgain \cite[Theorem 1 and Theorem 2]{Bourgain1989-IHES}.

Since polynomials in $\mathbb{Z}[x]$ are countable, it suffices to find an event $\Omega_0$  for any fixed polynomial $P$. With a fixed $P$, 
Theorem \ref{thm-erg} follows immediately from the following result of Fan
\cite[Corollary 2]{Fan-Osc} applied to $w_n = \xi_n -\sigma$ and Theorem \ref{prop-quasi-G} below. 

\begin{proposition} \cite[Corollary 2]{Fan-Osc} \label{Fan}
Let  $P \in \Z[x]$ be a polynomial with integer coefficient such that $P(\N) \subset \N$ and let  $(w_n)_{n= 0}^\infty$ is a bounded sequence in $\C$.
Suppose that there exist $C > 0, \varepsilon > 0$ such that for any $N \in \N$, we have
\begin{align}\label{D-cond}
\max_{0 \le t \le 1} \left|  \sum_{n=0}^{N-1} w_n e^{2 \pi i P(n) t}  \right| \le C \frac{N}{(\log N)^{1/2 + \varepsilon}}.
\end{align}
Then for any measure-preserving dynamical system  $(X, \mathcal{B}, \nu, T)$,  for any $r > 1$ and any $g \in L^r(X, \mathcal{B}, \nu)$, the limit
      $$
      \lim_{N \to \infty}  \frac{1}{ N} \sum_{n = 0}^{N-1}    w_n g(T^{P(n)} x)   = 0   
      $$ 
   holds for $\nu$-almost every $x\in X$ and in  $L^r$-norm. 
\end{proposition}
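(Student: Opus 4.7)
The plan is to adapt Bourgain's strategy for the polynomial pointwise ergodic theorem to the weighted averages $M_N g(x) := \frac{1}{N}\sum_{n=0}^{N-1} w_n g(T^{P(n)} x)$. The scheme has three ingredients: an $L^r$ maximal inequality, an $L^2$ decay estimate from the hypothesis, and a short-variation bound to upgrade $L^2$ decay along a lacunary subsequence to almost-everywhere convergence along the full sequence. First, since $(w_n)$ is bounded, one has the pointwise domination
\[
\sup_N |M_N g(x)| \le \|w\|_\infty \sup_N \frac{1}{N}\sum_{n=0}^{N-1} |g|(T^{P(n)} x),
\]
whose right-hand side is of strong type $(r,r)$ for every $r>1$ by Bourgain's pointwise ergodic theorem along polynomial orbits \cite{Bourgain1989-IHES}. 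Hence $g\mapsto \sup_N|M_N g|$ is bounded on $L^r(\nu)$ for every $r>1$, and by the standard Banach principle it is enough to prove a.e.\ convergence on a dense subclass of $L^r$, which we take to be $L^r\cap L^2$.

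Second, for $g\in L^2(\nu)$ with spectral measure $\sigma_g$ on $\T$, the spectral theorem combined with the Weyl-type hypothesis \eqref{D-cond} gives
\[
\|M_N g\|_{L^2(\nu)}^2 = \int_\T \left| \frac{1}{N}\sum_{n=0}^{N-1} w_n e^{2\pi i P(n) t}\right|^2 d\sigma_g(t) \le \frac{C^2 \|g\|_{L^2}^2}{(\log N)^{1+2\varepsilon}}.
\]
Along the lacunary subsequence $N_k=2^k$, one has $\sum_{k\ge 2}\|M_{N_k}g\|_{L^2}^2 \lesssim \|g\|_{L^2}^2\sum_{k\ge 2} k^{-1-2\varepsilon}<\infty$, so $M_{N_k}g\to 0$ pointwise $\nu$-a.e.\ by a Borel--Cantelli argument (equivalently, by summability of the $\ell^2$-valued square function).

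Third, to pass from the lacunary sequence to the full sequence, one controls the short variation $V_k g(x):=\sup_{N_k\le N<N_{k+1}} |M_N g(x) - M_{N_k}g(x)|$. Decomposing
\[
M_N g - M_{N_k}g = \frac{N_k - N}{N}\, M_{N_k}g + \frac{1}{N}\sum_{n=N_k}^{N-1} w_n g(T^{P(n)} x)
\]
reduces the problem to bounding the supremum of the truncated sums over a single dyadic block. Applying the spectral theorem to the block and following a Rademacher--Menshov dyadic argument (which costs a factor $\log N_k\asymp k$ that is absorbed by the extra $(\log N)^{\varepsilon}$ in \eqref{D-cond}) yields $\|V_k g\|_{L^2}^2\lesssim k^{-1-\varepsilon}\|g\|_{L^2}^2$, which is summable. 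Hence $M_N g\to 0$ $\nu$-a.e.\ for every $g\in L^2$; density together with the $L^r$ maximal inequality extends this to every $g\in L^r$ with $r>1$. The $L^r$-norm convergence follows from $L^2$-decay on $L^r\cap L^2$ plus the uniform $L^r$-bound on $M_N$. The main obstacle is the short-variation estimate: \eqref{D-cond} provides the full exponential sum only on $[0,N-1]$, so one must carefully extract control of the truncated sums $\sum_{n=N_k}^{N-1}w_n e^{2\pi i P(n)t}$ by dyadic splitting and a logarithmic Rademacher--Menshov bookkeeping, and it is precisely for this step that the slightly superpolynomial denominator $(\log N)^{1/2+\varepsilon}$ (rather than the critical $(\log N)^{1/2}$) is needed.
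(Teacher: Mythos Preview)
The paper does not prove this proposition; it is quoted from \cite{Fan-Osc}. However, Section~6 of the present paper makes clear that the argument in \cite{Fan-Osc} hinges on the Davenport--Erd\H{o}s--LeVeque theorem (Theorem~\ref{HighDEL} here, in dimension $d=1$): once the spectral estimate $\|M_N g\|_{L^2(\nu)}^2\le C^2\|g\|_2^2/(\log N)^{1+2\varepsilon}$ is in hand for bounded $g$, one has $\sum_{N\ge 2}\|M_Ng\|_2^2/N<\infty$, and DEL gives $M_N g\to 0$ $\nu$-a.e.\ directly, with no short-variation step. Density plus Bourgain's maximal inequality then extends this to $L^r$.

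Your Steps~1 and~2 are fine and match that scheme. The gap is Step~3: the Rademacher--Menshov bookkeeping you describe does \emph{not} yield $\|V_k g\|_2^2\lesssim k^{-1-\varepsilon}$. For a dyadic sub-block $I\subset[N_k,2N_k)$ of length $2^j$, the hypothesis \eqref{D-cond} (by differencing prefix sums) only gives the uniform multiplier bound $\sup_t\big|\sum_{n\in I}w_ne^{2\pi iP(n)t}\big|\lesssim N_k/k^{1/2+\varepsilon}$, while the trivial bound is $\|w\|_\infty 2^j$. Carrying out the dyadic maximal inequality with these two bounds yields
\[
\Big\|\sup_{N_k\le N<2N_k}\Big|\sum_{n=N_k}^{N-1}w_n g(T^{P(n)}\,\cdot\,)\Big|\Big\|_2
\lesssim \sum_{j=0}^{k}\Big(\frac{N_k}{2^j}\Big)^{1/2}\min\Big(2^j,\frac{N_k}{k^{1/2+\varepsilon}}\Big)\|g\|_2
\asymp \frac{N_k}{k^{1/4+\varepsilon/2}}\|g\|_2,
\]
hence only $\|V_kg\|_2^2\lesssim k^{-1/2-\varepsilon}$, which is not summable for $\varepsilon\le 1/2$. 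The extra $(\log N)^\varepsilon$ is too weak to absorb the loss; indeed, even with your own accounting (``costs a factor $\log N_k\asymp k$''), paying a factor $k$ against $k^{-1/2-\varepsilon}$ makes things worse, not better. The underlying obstruction is that the spectral theorem does not commute with $\sup_M$: a uniform pointwise bound on the multipliers $\psi_M(t)$ does not control $\|\sup_M|S_M(g)|\|_2$.

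The DEL route sidesteps this entirely by choosing a \emph{sub-lacunary} subsequence $(n_r)$ with $n_{r+1}/n_r\to 1$ along which $\sum_r\|M_{n_r}g\|_2^2<\infty$ still holds (this is exactly the clever selection in the proof of Theorem~\ref{HighDEL}), and then interpolating via the elementary bound $|M_N g-M_{n_r}g|\le \|w\|_\infty\|g\|_\infty\,(n_{r+1}-n_r)/n_r\to 0$. That last step is why one first restricts to $g\in L^\infty$. Replace your Step~3 by an appeal to DEL and the proof goes through.
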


\begin{theorem}\label{prop-quasi-G} Let    $P \in \Z[x]$ be a polynomial  of degree $d$ such that $P(\N) \subset \N$.
	For $N$ large enough, we have 
	\begin{align}\label{inq-LD}
	\PP\left(  \max_{0 \le t \le 1} \left|  \sum_{n=0}^{N-1} (\xi_n(\omega) - \sigma) e^{2 \pi i P(n) t}\right|   \ge 100 \sqrt{ d N \log N} \right) \le \frac{1}{N^2}.
	\end{align}
Therefore, almost surely, there exists $C_\omega > 0$, such that for any $N \ge 2$, 
\begin{align}\label{D-cond-DPP}
\max_{0 \le t \le 1} \left|  \sum_{n=0}^{N-1} (\xi_n(\omega) - \sigma) e^{2 \pi i P(n) t}  \right| \le C_\omega   \sqrt{d N   \log N}.
\end{align}
\end{theorem}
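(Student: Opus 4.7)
The plan is to combine the sub-Gaussian estimate \eqref{inq-qG} (Proposition \ref{lem-qG}) with a Chernoff argument for each fixed $t$, and then upgrade to a uniform bound in $t$ via a net/discretization argument exploiting the polynomial bound on the derivative of $S_N(t):=\sum_{n=0}^{N-1}(\xi_n-\sigma)e^{2\pi i P(n)t}$.

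First, for a fixed $t\in[0,1]$, split $S_N(t)$ into real and imaginary parts. Each part has the form $\sum_{n=0}^{N-1}(\xi_n-\sigma)a_n$ with real coefficients $a_n$ satisfying $\sum a_n^2\le N$. Applying \eqref{inq-qG} with $\pm \lambda$ and optimizing as in the standard Chernoff argument gives
\[
\PP\bigl(|{\rm Re}\,S_N(t)|\ge u\bigr)\le 2e^{-u^2/(4N)},\qquad \PP\bigl(|{\rm Im}\,S_N(t)|\ge u\bigr)\le 2e^{-u^2/(4N)},
\]
so $\PP(|S_N(t)|\ge u)\le 4e^{-u^2/(8N)}$ for all $u>0$. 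Taking $u=50\sqrt{dN\log N}$ already beats $N^{-cd}$ by a large margin.

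Next, to pass to $\max_{t\in[0,1]}$, observe that $S_N$ is smooth in $t$ and
\[
|S_N'(t)|\le 2\pi\sum_{n=0}^{N-1}|\xi_n-\sigma|\,|P(n)|\le 2\pi N\cdot C_P N^{d}=:A_P N^{d+1},
\]
since $|P(n)|\le C_P n^d$ for $n\ge 1$. Choose a uniform net $\mathcal{N}_N\subset[0,1]$ with spacing $1/M$ where $M:=\lceil A_P N^{d+1}\rceil$; then $|\mathcal{N}_N|\le M+1$ and, for any $t\in[0,1]$, there is $t_*\in\mathcal{N}_N$ with $|S_N(t)-S_N(t_*)|\le 1$. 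A union bound over $\mathcal{N}_N$ combined with the pointwise estimate above yields, for $u=50\sqrt{dN\log N}$,
\[
\PP\Bigl(\max_{t\in\mathcal{N}_N}|S_N(t)|\ge u\Bigr)\le 4(M+1)\exp\!\Bigl(-\tfrac{50^2 d\log N}{8}\Bigr)\le \tfrac{1}{2N^2},
\]
for $N$ large enough, because the exponential decay $N^{-312 d}$ overwhelms the polynomial factor $M\lesssim N^{d+1}$. Combining with the net approximation, $\max_{t\in[0,1]}|S_N(t)|\ge 100\sqrt{dN\log N}$ forces $\max_{t\in\mathcal{N}_N}|S_N(t)|\ge 100\sqrt{dN\log N}-1\ge u$ for $N$ large, establishing \eqref{inq-LD}.

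Finally, \eqref{D-cond-DPP} follows by Borel--Cantelli applied to the events in \eqref{inq-LD}: since $\sum_N N^{-2}<\infty$, almost surely the bound $\max_t|S_N(t)|\le 100\sqrt{dN\log N}$ holds for all $N\ge N_0(\omega)$, and enlarging the constant absorbs the finitely many initial $N$'s, producing the random constant $C_\omega$. The only subtle step is the choice of net size: it must be polynomial in $N$ so that the exponential concentration swamps it, yet fine enough that the deterministic Lipschitz error is negligible compared to $\sqrt{dN\log N}$; the bound $|S_N'|\lesssim N^{d+1}$ makes both requirements compatible.
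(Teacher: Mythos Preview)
Your proof is correct. Both your argument and the paper's reduce the supremum over $t$ to something of polynomial size in $N$ by exploiting that $S_N$ is a trigonometric polynomial of degree $O(N^d)$, but the mechanisms differ. You use an $\varepsilon$-net of cardinality $O(N^{d+1})$ together with the deterministic Lipschitz bound $|S_N'|\lesssim N^{d+1}$ and a union bound; the paper instead follows the Salem--Littlewood--Kahane integration trick: Bernstein's inequality $\|Q'\|_\infty\le N^d\|Q\|_\infty$ forces $|Q|\ge\tfrac12\|Q\|_\infty$ on an interval of length $1/N^d$ around the maximizer, whence
\[
e^{\lambda\|Q\|_\infty/2}\le 2N^d\int_0^1\bigl(e^{\lambda Q(t)}+e^{-\lambda Q(t)}\bigr)\,dt,
\]
and one then takes expectation and applies Proposition~\ref{lem-qG} pointwise in $t$ before integrating. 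The Kahane approach avoids the explicit net and produces a multiplicative factor $N^d$ rather than $N^{d+1}$, giving marginally cleaner constants; your net argument is more elementary (no Bernstein inequality is needed) and makes the trade-off between net size and Lipschitz error completely explicit. In both routes the polynomial prefactor is crushed by the sub-Gaussian tail $e^{-c\,d\log N}$, and the concluding Borel--Cantelli step is identical.
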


The proof of Theorem \ref{prop-quasi-G} will be based on the following  sub-Gaussian property of the random variables 
\[
\sum_{n = 0}^{N-1}   (\xi_n(\omega) - \sigma) a_n.
\]
where $a_0, a_1, \cdots, a_{N-1} \in \R$. 

\begin{proposition}\label{lem-qG}
For any $N\in \N$ and $\lambda \in \R$  and  $ a: = (a_0, \cdots, a_{N-1}) \in \R^N$, we have
\begin{align}\label{inq-qG}
\E \left[ \exp\left(\lambda \sum_{n = 0}^{N-1}   (\xi_n(\omega) - \sigma) a_n \right ) \right] \le \exp \left(  \lambda^2 \sum_{n=0}^{N-1} a_n^2\right).
\end{align}
\end{proposition}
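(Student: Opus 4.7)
The plan is to control $\psi''(\lambda)$, where $\psi(\lambda) := \log \E[e^{\lambda Y}]$ and $Y := \sum_{n=0}^{N-1} a_n(\xi_n - \sigma)$, and then to integrate twice. Since $\psi''(\lambda) = \Var_\lambda(Y)$ is the variance of $Y$ under the exponentially tilted law $\PP_\lambda$ with $d\PP_\lambda/d\PP = e^{\lambda Y}/\E[e^{\lambda Y}]$, while $\psi(0) = 0$ and $\psi'(0) = \E[Y] = 0$, the desired MGF bound reduces to a uniform-in-$\lambda$ estimate $\psi''(\lambda) \le 2 \sum_n a_n^2$.

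\textbf{Step 1 (tilting preserves the DPP class).} Since $e^{\lambda Y} = (\mathrm{const}) \cdot \prod_n e^{\lambda a_n \xi_n}$, $\PP_\lambda$ is obtained from $\PP$ by multiplicative tilting. Under the regularity assumption $\|f\|_\infty < 1$, the $L$-ensemble $L_f := K_f(I - K_f)^{-1}$ is a bounded operator on $\ell^2(\Z)$, and the classical $L$-ensemble identity asserts that tilting by $\prod_n (e^{\lambda a_n})^{\xi_n}$ sends $L_f$ to $L_\lambda := D L_f D$ with $D := \mathrm{diag}(e^{\lambda a_n/2})$. The resulting correlation kernel $K_\lambda := L_\lambda(I + L_\lambda)^{-1}$ is Hermitian and satisfies $0 \preceq K_\lambda \preceq I$, so $\PP_\lambda$ is a DPP. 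The degenerate case $\|f\|_\infty = 1$ is handled by approximating $f$ by $f \wedge (1 - \varepsilon)$ and passing to the limit via dominated convergence.

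\textbf{Step 2 (variance bound for any DPP).} For any DPP with Hermitian kernel $K$ satisfying $0 \preceq K \preceq I$, setting $A := \mathrm{diag}(a_n)$, the identity $\Cov(\xi_n, \xi_m) = K(n,n)\delta_{nm} - |K(n,m)|^2$ rearranges to
\[
\Var_K\!\Big(\sum_n a_n \xi_n\Big) \;=\; \tr(AKA) - \tr(AKAK) \;=\; \tr\!\bigl(AKA\,(I - K)\bigr).
\]
Since $AKA = (K^{1/2}A)^*(K^{1/2}A) \succeq 0$ and $I - K \succeq 0$ with $\|I - K\| \le 1$, the trace inequality $\tr(PQ) \le \|P\|\,\tr(Q)$ for $P, Q \succeq 0$ gives
\[
\Var_K\!\Big(\sum_n a_n \xi_n\Big) \;\le\; \|I - K\|\,\tr(AKA) \;=\; \|I - K\| \sum_n a_n^2 K(n,n) \;\le\; \sum_n a_n^2.
\]
Applied to $K_\lambda$ from Step 1, this gives $\psi''(\lambda) \le \sum_n a_n^2$ uniformly in $\lambda \in \R$.

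\textbf{Step 3 (integrate).} Combined with $\psi(0) = \psi'(0) = 0$, this yields $\psi(\lambda) \le \tfrac{1}{2}\lambda^2 \sum_n a_n^2 \le \lambda^2 \sum_n a_n^2$, completing the proof and actually delivering a factor-$2$ improvement over the stated constant.

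The hard part will be Step 1: making the closure of the DPP class under multiplicative tilting fully rigorous in the infinite-volume setting, especially near the degenerate regime $\|K_f\| = 1$. On a finite window the $L$-ensemble identity $L \mapsto DLD$ is immediate, and the infinite-volume version follows either by direct computation of the tilted correlation functions on every finite subset or by a compression-then-approximation argument using $f \wedge (1 - \varepsilon)$. Once Step 1 is secured, Steps 2 and 3 are essentially a short calculation.
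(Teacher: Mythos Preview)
Your argument is correct and takes a genuinely different route from the paper. The paper invokes Lyons's negative association theorem: splitting indices by the sign of $\lambda a_n$, applying Cauchy--Schwarz to separate the two products, using negative association to factorize each, and then bounding each single factor by Hoeffding's lemma $\E[e^{2(\xi_n-\sigma)\lambda a_n}]\le \cosh(2\lambda a_n)\le e^{2\lambda^2 a_n^2}$. Your approach instead differentiates the cumulant generating function, uses that exponential tilting keeps the process determinantal, and bounds $\Var_{K}\big(\sum a_n\xi_n\big)=\tr\big(AKA(I-K)\big)\le \sum a_n^2$ uniformly in the kernel. This trades a black-box structural input (negative association) for a different black-box structural input (closure under tilting), and rewards you with the sharper constant $\tfrac12\lambda^2\sum a_n^2$ that the paper's Cauchy--Schwarz step gives away.

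One simplification you should make in Step~1: there is no need to work in infinite volume or to approximate by $f\wedge(1-\varepsilon)$. Since $Y$ depends only on $\xi_0,\dots,\xi_{N-1}$, it suffices to tilt the \emph{marginal} on $\{0,\dots,N-1\}$, which is a finite DPP with kernel $K_N=\mathds{1}_{\{0,\dots,N-1\}}K_f\mathds{1}_{\{0,\dots,N-1\}}$. The paper's Lemma~\ref{lem-contractive} already shows $\|K_N\|<1$ for every $N$ whenever $\sigma\in(0,1)$, so the finite $L$-ensemble $L_N=K_N(I-K_N)^{-1}$ exists unconditionally, the tilting identity $L_N\mapsto DL_ND$ is elementary, and $K_\lambda=L_\lambda(I+L_\lambda)^{-1}$ is Hermitian with $0\preceq K_\lambda\preceq I$ by spectral calculus. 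With this reduction, your Step~1 becomes a two-line finite-dimensional computation and the whole proof is fully rigorous.
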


The proof of Proposition \ref{lem-qG} is a consequence of the negative association property proved by Lyons \cite[Theorem 6.5]{DPP-L} of the determinantal point processes. Here is  the definition of the negative association.  A function $f: \{0,1\}^{\Z}\to \R$ is said to be {\it increasing} if for  two  points $\xi, \eta \in\{0,1\}^{\Z}$ such that $\xi_n\leq \eta_n$ for all 
 $n\in \Z$, we have $f(\xi)\leq  f(\eta)$, and it is said to be {\it decreasing} if $-f$ is increasing.  A probability on $\{0,1\}^{\Z}$ is said to have {\it negative associations} if for any pair $f_1, f_2$ of increasing functions that are measurable with respect to complementary subsets of $\Z$ (i.e. $f_1$ and $f_2$ depend respectively on $\{\xi_i:i\in A\}$ and  $\{\xi_j: j\in B\}$ with $A\cap B=\emptyset$), we have ${\rm Cov} (f_1, f_2)\le 0$, namely
$$\E[ f_1f_2] \leq \E[ f_1]\E[ f_2] .$$
Note that the product of increasing {\bf nonnegative} functions is still  an increasing nonnegative function.
 When the probability has negative associations, for any collection $f_1 , f_2 , ..., f_n$ of increasing  nonnegative functions that are
measurable with respect to pairwise disjoint subsets of $\Z$, we have 
 $$ \E[f_1 f_2 \cdots f_n] \leq \E[f_1]\E[f_2]\cdots\E[f_n].$$
Remark that the negative association property also implies that the above inequality holds for decreasing nonnegative functions as well.



\begin{proof}[Proof of Proposition \ref{lem-qG}]
	We will need the following elementary inequality:
	for any $u \in [-1, 1]$ and  $x\in \R$, we have 
	\begin{align}\label{inq-convexity}
	e^{ux} \le \frac{1+u}{2} e^x + \frac{1-u}{2} e^{-x}.
	\end{align}
	It is a direct consequence of the convexity of the exponential function and 
	the fact $u x = \frac{1 + u}{2} x + \frac{1-u}{2} (-x)$. 
	
Fix $N\in \N$, $\lambda \in \R$  and  $ a: = (a_0, \cdots, a_{N-1}) \in \R^N$. We divided  $\{0, 1, \cdots, N-1\}$ into two subsets following the sign of $\lambda a_n$ as follows:
$$
I^{+} : = \{0 \le n \le N-1:  \lambda a_n \ge 0 \}, \quad I^{-}: = \{0 \le n \le N-1:\lambda a_n < 0\}.
$$
Then we may write 
\[
 \exp \left( \sum_{n = 0}^{N-1}   (\xi_n(\omega) - \sigma) \lambda a_n\right)  =  \prod_{n \in I^{+}}  e^{ (\xi_n(\omega) - \sigma) \lambda a_n}   \prod_{n \in I^{-}}  e^{ (\xi_n(\omega) - \sigma) \lambda a_n}.
\]
By Cauchy-Schwarz inequality, we obtain 
\[
\E \left[ \exp\left(\lambda \sum_{n = 0}^{N-1}   (\xi_n(\omega) - \sigma) a_n \right ) \right]  \le  \E \left(  \prod_{n \in I^{+}}  e^{ 2 (\xi_n(\omega) - \sigma) \lambda a_n}   \right)^{1/2}  \E \left(  \prod_{n \in I^{-}}  e^{ 2 (\xi_n(\omega) - \sigma) \lambda a_n}   \right)^{1/2}.
\]
Note that all the functions $\xi \mapsto e^{ 2 (\xi_n(\omega) - \sigma) \lambda a_n}$ with $n \in I^{+}$ are {\it positive}  and increasing, while all the functions $\xi \mapsto e^{ 2 (\xi_n(\omega) - \sigma) \lambda a_n}$ with $n \in I^{-}$ are {\it positive} and decreasing,  hence by the negative association of the determinantal process, we have 
\[
 \E \left(  \prod_{n \in I^{+}}  e^{ 2 (\xi_n(\omega) - \sigma) \lambda a_n}   \right)  \le  \prod_{n \in I^{+}}  \E \left(   e^{ 2 (\xi_n(\omega) - \sigma) \lambda a_n}   \right)
 \]
 and
 \[
  \E \left(  \prod_{n \in I^{-}}  e^{ 2 (\xi_n(\omega) - \sigma) \lambda a_n}   \right)  \le  \prod_{n \in I^{-}}  \E \left(   e^{ 2 (\xi_n(\omega) - \sigma) \lambda a_n}   \right).
\]
Therefore, 
\begin{align}\label{na-inq}
\E \left[ \exp\left(\lambda \sum_{n = 0}^{N-1}   (\xi_n(\omega) - \sigma) a_n \right ) \right]  \le \left (  \prod_{n =0}^{N-1}  \E \left(   e^{ 2 (\xi_n(\omega) - \sigma) \lambda a_n}   \right)\right)^{1/2}.
\end{align}
Now use the fact that $\xi_n - \sigma  \in [-1, 1]$ and the inequality \eqref{inq-convexity}, we have 
\[
e^{ 2 (\xi_n(\omega) - \sigma) \lambda a_n}  \le \frac{1 + (\xi_n - \sigma)}{2} e^{2 \lambda a_n }   + \frac{1 - (\xi_n - \sigma)}{2} e^{- 2 \lambda a_n}
\]
and hence 
\[
\E \left( e^{ 2 (\xi_n(\omega) - \sigma) \lambda a_n} \right) \le \frac{e^{2 \lambda a_n }   + e^{- 2 \lambda a_n}}{2}.
\]
Using the following elementary inequality 
\[
\frac{e^x  + e^{-x}}{2} = \sum_{k = 0}^\infty \frac{x^{2k}}{(2k)!} \le \sum_{k = 0}^\infty \frac{x^{2k}}{2^k k!}  = e^{x^2/2}, 
\]
we obtain 
\begin{align}\label{inq-single}
\E \left( e^{ 2 (\xi_n(\omega) - \sigma) \lambda a_n} \right) \le e^{2 \lambda^2 a_n^2}.
\end{align}
Combining the inequalities \eqref{na-inq} and \eqref{inq-single}, we obtain the desired inequality \eqref{inq-qG}.
\end{proof}


	\begin{proof}[Proof of Theorem \ref{prop-quasi-G}]
	Our proof is the combination of an idea of Salem-Littlewood developped by Kahane \cite{Kahane-random} and the above sub-gaussian property (\ref{inq-qG}). 
Set 
$$
Q_\omega(t)  = \sum_{n=0}^{N-1} (\xi_n(\omega) - \sigma) \cos(2\pi  P(n) t), \quad t \in [0, 1).
$$
By Bernstein's inequality for trigonometric polynomials, we have 
$$
\|Q_\omega'\|_\infty \le  (N-1)^d \| Q_\omega\|_\infty \le N^d \| Q_\omega\|_\infty.
$$
Now let  $t_{max}(\omega) \in [0, 1)$ be such that $| Q_\omega(t_{max})| = \| Q_\omega \|_\infty$ (Here, we do not need the information whether $\omega \mapsto t_{max}(\omega)$ is measurable or not). Set 
$$
I(\omega) : = \{t \in [0, 1):  | t -t_{max}(\omega)| \le \frac{1}{2N^d} \}.
$$ 
Then for any $t \in I(\omega)$, we have 
$$
\left| Q_\omega(t) - Q_\omega(t_{max}(\omega) ) \right| \le \|Q_\omega'\|_\infty\cdot | t - t_{max}(\omega)| \le \frac{\| Q_\omega \|_\infty}{ 2},
$$
which implies that for any $t \in I(\omega)$, $| Q_\omega(t)| \ge \frac{\| Q_\omega\|_\infty}{2}$ and hence 
$$
e^{\lambda  \frac{\|Q_\omega \|_\infty}{2}} \le e^{\lambda  Q_\omega(t)}  + e^{-\lambda Q_\omega(t)}.
$$
It follows that 
$$
e^{\lambda  \frac{\|Q_\omega \|_\infty}{2}}  \le \frac{1}{| I(\omega)|} \int_{I(\omega)} \left( e^{\lambda  Q_\omega(t)}  + e^{-\lambda Q_\omega(t)}\right) dt \le 2N^d \int_{[0, 1)} \left( e^{\lambda  Q_\omega(t)}  + e^{-\lambda Q_\omega(t)}\right) dt.
$$
Therefore, by using Proposition \ref{lem-qG} (using the inequalities $| \cos (2 \pi P(n) t) | \le 1$), we obtain that for any $\lambda \in [-1/2, 1/2]$, 
\begin{align*}
\E\left(e^{\lambda  \frac{\|Q_\omega \|_\infty}{2}}  \right) \le 2 N^d  \int_{[0, 1)} \left( \E e^{\lambda  Q_\omega(t)}  + \E e^{-\lambda Q_\omega(t)}\right) dt \le 4N^d  \exp( \lambda^2  N ).
\end{align*}
By Markov-Chebychev's inequality, for any $\Delta > 0$, we have 
\begin{align*}
\PP\left( \| Q_\omega\|_\infty \ge \Delta \right)  \le \frac{\E\left(e^{\lambda  \frac{\|Q_\omega \|_\infty}{2}}  \right) }{e^{\lambda \Delta/2}} \le   4N^d  \exp\left( \lambda^2  N - \lambda \frac{\Delta}{2}\right).
\end{align*}
Taking $\lambda = \frac{\Delta}{4N}$, we obtain 
\begin{align*}
\PP\left( \| Q_\omega\|_\infty \ge \Delta \right) \le   4N^d  \exp\left(- \frac{\Delta^2}{16N}\right).
\end{align*}
Now by taking $\Delta>0$ such that 
$$
4N^d  \exp\left(- \frac{\Delta^2}{16N}\right) = \frac{1}{2 N^2}, 
$$
we obtain $\Delta = 4 \sqrt{ N \log (8N^{d+2})} $ and hence 
\begin{align*}
\PP\left( \| Q_\omega\|_\infty \ge 4 \sqrt{ N \log (8N^{d+2})}  \right) \le \frac{1}{ 2N^2}, \quad \text{for $N$ large.}
\end{align*}
By similar arguments, let
$$
P_\omega(t)  = \sum_{n=0}^{N-1} (\xi_n(\omega) - \sigma) \sin(2\pi  P(n) t), \quad t \in [0, 1),
$$
then
\begin{align*}
\PP\left( \| Q_\omega\|_\infty \ge 4\sqrt{N \log (8N^{d+2})}  \right) \le \frac{1}{ 2N^2}, \quad \text{for $N$ large.}
\end{align*} 
Consequently, for large enough $N$, we have
\begin{align*}
& \PP\left( \| Q_\omega + i P_\omega \|_\infty \ge 8 \sqrt{ N \log (8N^{d+2})}    \right) 
\\
\le &\PP\left( \| Q_\omega\|_\infty \ge 4 \sqrt{ N \log (8N^{d+2})}  \right) + \PP\left( \| P_\omega\|_\infty \ge 4 \sqrt{ N \log (8N^{d+2})}  \right)
\\
\le & \frac{1}{N^2}.
\end{align*}
Since for large $N$, we have $100 \sqrt{d N \log N } \ge 8 \sqrt{N \log (8N^{d+2})}$, we obtain the desired inequality \eqref{inq-LD}.

By Borel-Cantelli lemma, the inequality \eqref{inq-LD} implies that for $\PP$-a.e. $\omega$, when $N$ is large enough, we have 
$$
\max_{0 \le t \le 1} \left|  \sum_{n=0}^{N-1} (\xi_n(\omega) - \sigma) e^{2 \pi i P(n) t}\right|   \le 100 \sqrt{ d N \log N}.
$$ 
This implies the desired domination \eqref{D-cond-DPP}. 
\end{proof}

\section{Khintchine-Kahane inequalitiy}
In this section, we will obtain a Khintchine-Kahane inequality and will apply it to study  the almost everwhere convergence of   the random series:
\begin{align}\label{random-sum}
\sum_{n} a_n  (\xi_n - \sigma), 
\end{align}
where $a  = (a_n)_{n\in\Z}\in \ell^2(\Z)$ is a square summable sequence in $\C$.

We first show that the series \eqref{random-sum} defines a random variable for all $a \in \ell^2(\Z)$. More precisely, we have the following result.

\begin{theorem}\label{thm-L2}
Assume $f: \T \rightarrow [0, 1]$ with $\sigma := \int f d\m \in (0, 1)$.
For any complex sequence $a= (a_n)_{n\in\Z}\in \ell^2(\Z)$ we have 
\begin{align}\label{apriori-maj}
\left\|\sum_n a_n (\xi_n - \sigma) \right\|_2 \le   \sqrt{\sigma} \left( \sum_n | a_n|^2 \right)^{1/2}
\end{align}
and then  the series $\sum_n a_n (\xi_n - \sigma)$ converges in $L^2$-mean.
If $f$ is not an indicator function, we have the inverse inequality
\begin{align}\label{L2-min}
\left\|\sum_n a_n (\xi_n - \sigma) \right\|_2 \ge c_f \left( \sum_n | a_n|^2 \right)^{1/2}
\end{align}
for all $a \in \ell^2(\Z)$, where 
\begin{align}\label{c-f}
c_f = \sqrt{\int_{\T} f (1-f) d\m}> 0. 
\end{align}
(We have $c_f>0$ if and only if $f$ is not an indicator function).
\end{theorem}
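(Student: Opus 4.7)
The plan is to reduce everything to a computation of the covariance matrix of the $\xi_n$'s and then translate the resulting quadratic form to the Fourier side on $\T$.

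\textbf{Step 1: Covariance.} Using the $2\times 2$ determinant in \eqref{def-DPP-bis} together with $\widehat{f}(m-n)=\overline{\widehat{f}(n-m)}$ (since $f$ is real), I get $\E(\xi_n\xi_m)=\sigma^2-|\widehat{f}(n-m)|^2$ for $n\ne m$, while $\E(\xi_n^2)=\E(\xi_n)=\sigma$. Both cases are unified as
\[
\Cov(\xi_n,\xi_m)=\sigma\,\delta_{nm}-|\widehat{f}(n-m)|^2.
\]
Plugging into $\|\sum_n a_n(\xi_n-\sigma)\|_2^2$ yields, for any finitely supported $(a_n)$,
\[
\Big\|\sum_{n} a_n(\xi_n-\sigma)\Big\|_2^2=\sigma\sum_n|a_n|^2-\sum_{n,m} a_n\overline{a_m}\,|\widehat{f}(n-m)|^2.
\]

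\textbf{Step 2: Fourier translation.} Set $A(t):=\sum_n a_n e^{2\pi i n t}\in L^2(\T)$ and $f^*(t):=\overline{f(-t)}$. Then $\widehat{f\ast f^*}(k)=|\widehat{f}(k)|^2$, and since $f\ast f^*$ is real and even, the standard identity
\[
\sum_{n,m} a_n\overline{a_m}\,\widehat{f\ast f^*}(n-m)=\int_{\T}(f\ast f^*)(t)\,|A(t)|^2\,d\m(t)
\]
gives
\[
\Big\|\sum_n a_n(\xi_n-\sigma)\Big\|_2^2=\sigma\,\|a\|_{\ell^2}^2-\int_{\T}(f\ast f^*)|A|^2\,d\m.
\]
Because $f\ge 0$, we have $f\ast f^*\ge 0$, so the integral is nonnegative, which proves \eqref{apriori-maj}; this bound then lets me extend the whole identity to arbitrary $a\in\ell^2(\Z)$ by density and deduce $L^2$-convergence of the series $\sum_n a_n(\xi_n-\sigma)$.

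\textbf{Step 3: Lower bound.} The key observation is that $f\ge 0$ forces
\[
\|f\ast f^*\|_\infty=(f\ast f^*)(0)=\int_{\T}f^2\,d\m,
\]
which follows from Cauchy--Schwarz applied to $(f\ast f^*)(t)=\int f(s)f(s-t)\,d\m(s)$. Hence
\[
\int_{\T}(f\ast f^*)|A|^2\,d\m\le\Big(\int_{\T}f^2\,d\m\Big)\|a\|_{\ell^2}^2,
\]
and combining with Step 2,
\[
\Big\|\sum_n a_n(\xi_n-\sigma)\Big\|_2^2\ge\Big(\sigma-\int f^2\,d\m\Big)\|a\|_{\ell^2}^2=\Big(\int f(1-f)\,d\m\Big)\|a\|_{\ell^2}^2=c_f^2\,\|a\|_{\ell^2}^2,
\]
which is \eqref{L2-min}. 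Finally, $c_f=0$ iff $f(1-f)=0$ a.e., iff $f$ is a.e.\ an indicator function, so the hypothesis in the second half of the theorem is exactly equivalent to $c_f>0$.

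The only slightly delicate point is the interchange of sum and integral in Step 2 and its extension to all $a\in\ell^2$ in Step 3; both are routine once the a priori bound \eqref{apriori-maj} is established, which is why it is natural to prove the upper bound first and then upgrade to the lower bound.
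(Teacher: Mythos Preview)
Your proof is correct and follows essentially the same approach as the paper: both compute the covariance $\Cov(\xi_n,\xi_m)=\sigma\,\delta_{nm}-|\widehat{f}(n-m)|^2$, identify the resulting quadratic form with the convolution $f\ast f^\vee$ (your $f\ast f^*$, which coincides since $f$ is real), use nonnegativity of $f\ast f^\vee$ for the upper bound, and use $\|f\ast f^\vee\|_\infty=(f\ast f^\vee)(0)=\int f^2\,d\m$ (via Cauchy--Schwarz) for the lower bound. The only cosmetic difference is that the paper phrases Step~2 in operator language, writing $\left\langle (\sigma\,\mathrm{Id}-K_{f\ast f^\vee})a,a\right\rangle_{\ell^2}$ and invoking the unitary equivalence $K_{f\ast f^\vee}=\mathscr{F}\circ M_{f\ast f^\vee}\circ\mathscr{F}^{-1}$, whereas you pass directly to the integral $\int_\T (f\ast f^*)|A|^2\,d\m$ on the Fourier side; by Parseval these are the same statement.
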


\begin{proof}
Note first that $\xi_n^2 = \xi_n$ for any $n\in \Z$. Therefore 
\begin{equation}\label{Riesz-1}
\E((\xi_n-\sigma)^2) = \E( \xi_n - 2 \sigma \xi_n + \sigma^2) = \sigma - \sigma^2 = \sigma - | \widehat{f}(0)|^2.
\end{equation}
If $n \ne m$, we have already seen that
\begin{equation}\label{Riesz-2}
\E(\xi_n \xi_m)=   \det \left(\begin{array}{cc}  \widehat{f}(n-n)& \widehat{f}(n-m) \\ \widehat{f}(m-n) &\widehat{f}(m-m)  \end{array}\right) = \sigma^2 - | \widehat{f}(n-m)|^2.
\end{equation}
Therefore, 
$$
\E((\xi_n-\sigma) (\xi_m-\sigma))= - | \widehat{f}(n-m)|^2.
$$

To prove (\ref{apriori-maj}), we have only to prove it for finitely supported sequences $a$. By (\ref{Riesz-1}) and (\ref{Riesz-2}), we have 
\begin{align}\label{2-norm-f}
\begin{split}
\left\|\sum_n a_n (\xi_n - \sigma) \right\|_2^2   & = \sum_{n, m} a_n \overline{a}_m \E((\xi_n-\sigma) (\xi_m-\sigma)) 
\\
& = \sigma \sum_{n} | a_n|^2 - \sum_{n, m} a_n \overline{a}_m  |\widehat{f}(n-m)|^2.
\end{split}
\end{align}
Define $f^{\vee}(t) = f(-t)$ for $t \in \R/\Z$. Then 
$
|\widehat{f}|^2 = \widehat{f* f^\vee},
$
where $f * f^\vee$ denotes the convolution of $f$ and $f^\vee$, defined by
$$
f * f^\vee (s) 
= \int_{\R/\Z} f(s+t) f(t) dt.  
$$
We note that $f*f^\vee$ is a continuous function, and it  takes values in $[0, \sigma]$ because $0\le  f \le 1$ and $\int f d\m =\sigma$.
It follows that, the operator $K_{f*f^\vee}: \ell^2(\Z) \rightarrow \ell^2(\Z)$ appearing in the following commuting diagram 
$$
\begin{CD}
L^2(\T) @> \quad M_{f*f^\vee} \quad >> L^2(\T)  \\
@V{\mathscr{F}}VV @VV {\mathscr{F}}V \\
\ell^2(\Z) @> K_{f*f^\vee}>> \ell^2(\Z),
\end{CD}
$$
has an infinite matrix representation 
$$
\Big[| \widehat{f}(m-n)|^2\Big]_{m, n\in \Z}
$$
and its operator norm is equal to $\|f * f^\vee\|_\infty$ which is the norm of $M_{f*f^\vee}$: 
$$
\| K_{f*f^\vee}\| = \| f * f^\vee\|_\infty \le \sigma.
$$

The equality \eqref{2-norm-f} can be reformulated as 
\begin{equation}\label{Riesz-3}
\left\|\sum_n a_n (\xi_n - \sigma) \right\|_2^2   = \left\langle \Big(\sigma {\rm Id} - K_{f * f^\vee}\Big) a, a \right\rangle_{\ell^2(\Z)}.
\end{equation}
The positivity of the operators $K_{f*f^\vee}$ implies immediately 
\[
\left\langle \Big(\sigma {\rm  Id} - K_{f * f^\vee}\Big) a, a \right\rangle_{\ell^2(\Z)}\le  \sigma \| a \|_2^2,
\]
which implies the desired inequality \eqref{apriori-maj}.

For proving the inverse inequality \eqref{L2-min}, note that 
\begin{align*}
\| f*f^\vee\|_\infty = \int_{\R/\Z} f(t)^2 dt.
\end{align*}
Indeed, this follows from the continuity of  $f*f^\vee$, the fact $f*f^\vee (0) = \int_{\R/\Z} f(t)^2 dt$ and the estimate
\[
\forall s \in \R/\Z, \ \ \
f*f^\vee (s) 
\le   \left(\int_{\R/\Z} f(s + t)^2 dt\right)^{1/2}  \left( \int_{\R/\Z}  f(t)^2 dt \right)^{1/2} = \int_{\R/\Z} f(t)^2 dt.
\]
It follows that (with $c_f$ defined in \eqref{c-f})
\[
\sigma  {\rm Id} - M_{f * f^\vee} \ge  c_f^2 \cdot  {\rm Id}
\ \ \ {\rm or \ equivalently} \ \ \ 
\sigma  {\rm Id} - K_{f * f^\vee} \ge  c_f^2 \cdot  {\rm Id},
\]
which, together with (\ref{Riesz-3}), implies the desired inequality \eqref{L2-min}.

Now we prove that   the inequality \eqref{L2-min} holding for some constant $c>0$ implies $f$ is not an indicator function.
Assume that the inequality \eqref{L2-min} holds for some constant $c>0$. Then 
$$
\left\langle \Big(\sigma  {\rm Id}  - K_{f * f^\vee}\Big) a, a \right\rangle_{\ell^2(\Z)}\ge  c\| a \|_2^2 =c \left\langle a, a \right\rangle_{\ell^2(\Z)}, 
\quad \forall a \in \ell^2 (\Z).
$$
This implies $\sigma  {\rm Id} - K_{f * f^\vee} \ge c  {\rm Id}$ and hence 
\[
\int_{\R/\Z} f(t)^2 dt = \| K_{f * f^\vee}\| \le \sigma -c  < \int_{\R/\Z} f(t)dt.
\]
The above strict inequality holds (under the assumption $0\le f \le 1$) if and only if $f$ is not an indicator function. 
\end{proof}

Now let us prove the Khintchine inequality for the random sequence 
$(\xi_n -\sigma)$. 

\begin{theorem}\label{thm-KK}
Assume that $f$ is not an indicator function. Then for any $p \in [2, \infty)$, there exists $C = C_f(p)> 0$ such that
\begin{align}\label{KK-inq}
\left\| \sum_{n} a_n  (\xi_n - \sigma) \right\|_2 \le \left\| \sum_{n} a_n  (\xi_n - \sigma) \right\|_p \le C_f(p)\left\| \sum_{n} a_n  (\xi_n - \sigma) \right\|_2, \quad \forall a \in \ell^2(\Z).
\end{align} 
 Here we may take 
$$
C_f(p) = \frac{2\sqrt{2} \Gamma\left(\frac{p}{2}+1\right)^{\frac{1}{p}}}{ \sqrt{\int_{\R/\Z} f(1-f) d\m}}.
$$
\end{theorem}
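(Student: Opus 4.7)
The plan is to combine the sub-Gaussian estimate of Proposition \ref{lem-qG} with the Riesz-type lower bound from Theorem \ref{thm-L2}, passing through a sharp Gaussian tail bound. Throughout, write $X:=\sum_n a_n(\xi_n-\sigma)$. The left-hand inequality $\|X\|_2\le \|X\|_p$ is immediate from Jensen's inequality on the probability space $(\Omega,\PP)$ since $p\ge 2$, so all the effort is for the right-hand side.

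I would first establish the upper bound in the model case of real coefficients $a_n\in\R$. Setting $A:=\|a\|_{\ell^2}$, Proposition \ref{lem-qG} gives $\E[e^{\lambda X}]\le e^{\lambda^2 A^2}$ for every $\lambda\in\R$. Applied to both $X$ and $-X$ and optimized in $\lambda>0$, Markov's inequality yields the two-sided Gaussian tail
\[
\PP(|X|>t)\le 2\,e^{-t^2/(4A^2)},\qquad t>0.
\]
Plugging this into the layer-cake identity $\E|X|^p=p\int_0^\infty t^{p-1}\PP(|X|>t)\,dt$, substituting $u=t^2/(4A^2)$, and using $p\,\Gamma(p/2)=2\Gamma(p/2+1)$ gives $\E|X|^p\le 2(2A)^p\Gamma(p/2+1)$. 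Taking $p$-th roots and noting $2^{1/p}\le\sqrt{2}$ for $p\ge 2$ produces the real-case estimate $\|X\|_p\le 2\sqrt{2}\,\Gamma(p/2+1)^{1/p}\,A$.

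The second step is to pass to complex coefficients $a_n=b_n+ic_n$, writing $X=X_b+iX_c$ with $X_b,X_c$ of the real form just handled. The point is to avoid losing an extra factor $\sqrt{2}$ from a naive Minkowski split. Instead, I would use $|X|^p=(X_b^2+X_c^2)^{p/2}$ together with the convexity of $x\mapsto x^{p/2}$ (since $p/2\ge 1$) in the form $(X_b^2+X_c^2)^{p/2}\le 2^{p/2-1}(|X_b|^p+|X_c|^p)$. Combining this with the real-case bound applied to each of $X_b,X_c$ and the opposite-sense inequality
\[
\|b\|_{\ell^2}^p+\|c\|_{\ell^2}^p\le \bigl(\|b\|_{\ell^2}^2+\|c\|_{\ell^2}^2\bigr)^{p/2}=\|a\|_{\ell^2}^p
\]
(valid since $\|\cdot\|_{\ell^p}\le\|\cdot\|_{\ell^2}$ on $\R^2$ when $p\ge 2$) yields, after the dust settles, $\E|X|^p\le 2^{3p/2}\|a\|_{\ell^2}^p\,\Gamma(p/2+1)$, i.e.\ $\|X\|_p\le 2\sqrt{2}\,\Gamma(p/2+1)^{1/p}\,\|a\|_{\ell^2}$ in general.

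Finally, Theorem \ref{thm-L2} provides $c_f\|a\|_{\ell^2}\le \|X\|_2$ under the standing assumption that $f$ is not an indicator, so substituting yields the claim with $C_f(p)=2\sqrt{2}\,\Gamma(p/2+1)^{1/p}/c_f$. The main delicate point is the constant bookkeeping in the real-to-complex passage: using the two convexity-type inequalities in opposite directions — Jensen to push $|X|^p$ upward into $|X_b|^p+|X_c|^p$ and the $\ell^p\subset\ell^2$ inclusion in $\R^2$ to push $A_b^p+A_c^p$ upward into $(A_b^2+A_c^2)^{p/2}$ — is precisely what prevents the constant from deteriorating to $4$, which is what a routine Minkowski plus Cauchy--Schwarz argument would deliver.
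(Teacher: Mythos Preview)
Your argument is correct and follows essentially the same route as the paper: the sub-Gaussian estimate of Proposition~\ref{lem-qG} gives a Gaussian tail, the layer-cake formula turns this into the $L^p$ bound $\|X\|_p\le 2\sqrt{2}\,\Gamma(p/2+1)^{1/p}\|a\|_{\ell^2}$ (the paper's Lemma~\ref{lem-p-norm}), and Theorem~\ref{thm-L2} converts $\|a\|_{\ell^2}$ to $\|X\|_2$. Your constant bookkeeping is in fact more careful than the paper's---you keep the factor $2$ in the two-sided tail and spell out the real-to-complex passage via the convexity pairing, whereas the paper leaves both of these implicit.
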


Theorem \ref{thm-KK} follows immediately from Theorem \ref{thm-L2} and Lemma \ref{lem-p-norm} below. We emphasize that the upper estimate  of the $L_p$-norm given in Lemma \ref{lem-p-norm} holds for all $f : \T \rightarrow [0, 1]$ with $\sigma \in (0, 1)$ (that is, without the assumption that $f$ is not an indicator function).

\begin{lemma}\label{lem-p-norm}
For any $f: \T \rightarrow [0, 1]$ with $\sigma = \int f  d\m \in (0, 1)$ and any $p \in [1, \infty)$, we have 
\begin{align*}
 \left\| \sum_{n} a_n  (\xi_n - \sigma) \right\|_p \le   2\sqrt{2} \Gamma\left(\frac{p}{2}+1\right)^{\frac{1}{p}} \left(\sum_n a_n^2\right)^{1/2}, \quad \forall a \in \ell^2(\Z).
 \end{align*}
\end{lemma}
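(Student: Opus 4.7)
The plan is to convert the sub-Gaussian moment-generating-function estimate of Proposition \ref{lem-qG} into an $L^p$-bound via the standard Chernoff-then-integrate recipe. Set $X := \sum_n a_n(\xi_n - \sigma)$ and $s^2 := \sum_n a_n^2$, and assume by truncation that $a$ has finite support and is real (the complex case reduces to this by splitting $a$ into real and imaginary parts). Applying Markov's inequality together with Proposition \ref{lem-qG} gives, for any $t, \lambda > 0$,
\[
\PP(X > t) \le e^{-\lambda t}\,\E e^{\lambda X} \le e^{\lambda^2 s^2 - \lambda t},
\]
and optimizing at $\lambda = t/(2s^2)$, together with the same estimate applied to $-X$, yields the two-sided tail bound $\PP(|X|>t) \le 2 e^{-t^2/(4s^2)}$.

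Feeding this into the layer-cake formula $\E|X|^p = \int_0^\infty p\,t^{p-1}\,\PP(|X|>t)\,dt$ and making the substitution $u = t^2/(4s^2)$ evaluates the resulting integral as $2^{p-1} s^p\,\Gamma(p/2)$. Using the identity $p\,\Gamma(p/2) = 2\,\Gamma(p/2+1)$, one arrives at
\[
\E|X|^p \le 2^{p+1}\,\Gamma(p/2+1)\,s^p, \qquad \text{hence} \qquad \|X\|_p \le 2 \cdot 2^{1/p}\,\Gamma(p/2+1)^{1/p}\,s.
\]
For $p \ge 2$, the inequality $2^{1/p} \le \sqrt{2}$ gives precisely the constant $2\sqrt{2}$ asserted in the lemma, which is the range actually needed in Theorem \ref{thm-KK}.

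For the remaining range $1 \le p < 2$, I would invoke the monotonicity $\|X\|_p \le \|X\|_2$ on the probability space, coupled with the bound $\|X\|_2 \le \sqrt{2}\,s$ obtained by reading off the $\lambda^2$-coefficient in the Taylor expansion of the sub-Gaussian MGF at $\lambda = 0$ (using $\E X = 0$). The claim then reduces to the elementary numerical inequality $\Gamma(p/2+1)^{1/p} \ge 1/2$ on $[1,2]$, which holds since $\Gamma(p/2+1) \ge \Gamma(3/2) = \sqrt{\pi}/2$ on that interval. There is no real obstacle beyond careful constant bookkeeping; the analytic content — the sub-Gaussian MGF bound of Proposition \ref{lem-qG}, itself a consequence of the negative association of the determinantal process — is already in hand, and what remains is the standard Chernoff-to-moment passage, which is purely computational.
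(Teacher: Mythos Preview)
Your approach is essentially the same as the paper's: reduce to real coefficients, apply Chernoff to the sub-Gaussian bound of Proposition~\ref{lem-qG} to get a Gaussian tail estimate, then integrate via the layer-cake formula to obtain the $L^p$ bound. The only difference is bookkeeping. The paper writes the tail bound directly as $\PP(|X|\ge t)\le e^{-t^2/(4s^2)}$ (tacitly absorbing the factor $2$ coming from the two one-sided Chernoff bounds), which yields $\|X\|_p\le 2\,\Gamma(p/2+1)^{1/p}s$ uniformly in $p\ge 1$; you keep the honest factor $2$ and obtain $2\cdot 2^{1/p}$, which only drops below $2\sqrt{2}$ when $p\ge 2$, hence your separate treatment of $1\le p<2$ via $\|X\|_p\le\|X\|_2$. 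Both routes are valid and the analytic content is identical.
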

\begin{proof}
It suffices to show 
\begin{align}\label{real-KK}
 \left\| \sum_{n} a_n  (\xi_n - \sigma) \right\|_p \le   \Gamma\left(\frac{p}{2}+1\right)^{\frac{1}{p}} \left(\sum_n a_n^2\right)^{1/2}
 \end{align}
 for real sequence $a\in \ell^2(\Z)$.
Now fix a non-zero  real sequnce $a \in \ell^2 (\Z)$.
Proposition \ref{lem-qG} and the Markov inequality imply that for any $t > 0$ and any $\lambda>0$, we have
 \[
 \PP\left(  \left| \sum_{n} a_n  (\xi_n - \sigma) \right|\ge t  \right)  \le \frac{\E \left[ \exp\left(\lambda \sum_{n}    (\xi_n(\omega) - \sigma) a_n \right ) \right]}{e^{\lambda t}}\le  \exp \left( \lambda^2 \sum_{n} a_n^2 -\lambda t\right). 
 \]
 Taking $\lambda = \frac{t}{2 \sum_n a_n^2}$, we obtain 
 \[
 \PP\left(  \left| \sum_{n} a_n  (\xi_n - \sigma) \right|\ge t  \right)  \le \exp\left(  -\frac{t^2}{4\sum_n a_n^2}\right).
 \]
 It follows that 
 \begin{align*}
 \left\| \sum_{n} a_n  (\xi_n - \sigma) \right\|_p^p & = p \int_{0}^\infty t^{p-1} \PP\left(  \left| \sum_{n} a_n  (\xi_n - \sigma) \right|\ge t  \right) dt
 \\
 & \le p \int_{0}^\infty t^{p-1}  \exp\left(  -\frac{t^2}{4\sum_n a_n^2}\right) dt 
 \\
& = 2^p \Gamma(p/2+1) \left(\sum_n a_n^2\right)^{p/2}.
 \end{align*}
  This implies the desired inequality \eqref{real-KK}.
  Such an argument was used in \cite{Fan2017ETDS}.
\end{proof}

\section{An arithmetic property}
Recall that   the sum-set $A+B$ of  two given sets $A, B\subset \Z$ is defined by 
\[
A + B: = \{a + b \in \Z \, |a \in A, b \in B \}.
\]

We have seen that the random set $\X$ is rich to some extent (intersecting every arithmetic sequence),
 but is not so rich to some other extent (not syndetic).
 In this section, we prove that its sum with itself is the whole set of integers.
  
\begin{theorem}\label{arithm}
For $\PP$-a.e. $\omega$, the random subset $\X = \X(\omega)$ satisfies
\[
\X  + \X = \Z.
\]
\end{theorem}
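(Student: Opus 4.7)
The plan is to prove that for each fixed $k \in \Z$, one has $\PP(k \in \X + \X) = 1$, and then conclude by a countable union over $k \in \Z$ that $\PP(\X + \X = \Z) = 1$.

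To show $\PP(k \in \X + \X) = 1$ for fixed $k$, the strategy is to exhibit infinitely many pairwise disjoint candidate pairs $B_j = \{a_j, k-a_j\} \subset \Z$ realizing the decomposition $k = a_j + (k-a_j)$, and apply the negative association property of the DPP. Concretely, I would choose a strictly increasing sequence $a_1 < a_2 < \cdots$ with $a_j > |k|$ and $a_j \ne k/2$; this guarantees $k-a_j < 0 < a_j$, and the sets $B_j$ are pairwise disjoint. For each $j$, the function $\xi \mapsto 1 - \xi_{a_j}\xi_{k-a_j}$ is a nonnegative \emph{decreasing} function depending only on the coordinates in the disjoint set $B_j$. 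By the negative association property (see the remark just before the proof of Proposition \ref{lem-qG}),
\[
\PP(k \notin \X+\X) \le \PP\!\left(\bigcap_{j\ge 1}\{\xi_{a_j}\xi_{k-a_j}=0\}\right) = \E\!\left[\prod_{j\ge 1}\bigl(1-\xi_{a_j}\xi_{k-a_j}\bigr)\right] \le \prod_{j\ge 1}\bigl(1-\E[\xi_{a_j}\xi_{k-a_j}]\bigr).
\]

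Next, I would use the explicit determinantal identity $\E[\xi_a \xi_b] = \sigma^2 - |\widehat{f}(a-b)|^2$ for $a\ne b$, established in \eqref{Riesz-2}, to get $\E[\xi_{a_j}\xi_{k-a_j}] = \sigma^2 - |\widehat{f}(2a_j - k)|^2$. Since $f \in L^\infty(\T) \subset L^1(\T)$, the Riemann–Lebesgue lemma gives $|\widehat{f}(2a_j - k)| \to 0$ as $a_j \to \infty$. Hence for all $j$ sufficiently large, each factor above is bounded by $1-\sigma^2/2 < 1$, and the infinite product vanishes. This yields $\PP(k \notin \X+\X) = 0$, as desired. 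A countable union over $k \in \Z$ then gives $\PP(\X+\X = \Z) = 1$.

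I do not expect a serious obstacle: the only points requiring care are the verification that the auxiliary pairs $B_j$ can be chosen pairwise disjoint (a routine elementary argument) and the observation that the negative association inequality applies to decreasing nonnegative functions, which is already noted in the paper.
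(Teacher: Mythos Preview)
Your proof is correct and follows the same overall strategy as the paper: fix $k\in\Z$, choose pairwise disjoint two-element sets $\{a_j,k-a_j\}$, and combine negative association with the two-point formula $\E[\xi_a\xi_b]=\sigma^2-|\widehat f(a-b)|^2$. The difference lies only in the endgame. The paper treats the events $A_j=\{a_j,k-a_j\in\X\}$ as \emph{increasing}, uses NA to obtain pairwise negative correlation $\PP(A_j\cap A_{j'})\le\PP(A_j)\PP(A_{j'})$, observes $\sum_j\PP(A_j)=\infty$ via $\sum_n|\widehat f(n)|^2<\infty$, and then invokes a generalized Borel--Cantelli lemma to get $\PP(\limsup A_j)=1$; this yields the slightly stronger conclusion that each $k$ admits infinitely many representations in $\X+\X$ almost surely. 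Your route instead applies the full NA product inequality to the \emph{decreasing} nonnegative functions $1-\xi_{a_j}\xi_{k-a_j}$ and uses Riemann--Lebesgue to force the infinite product $\prod_j(1-\sigma^2+|\widehat f(2a_j-k)|^2)$ to vanish. This is more self-contained (no external Borel--Cantelli reference needed) but establishes only the existence of at least one representation, which is all the theorem asserts.
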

\begin{proof} Since $\Z$ is countable, we have only to show that 
	$\mathbb{P} (m \in \X + \X) =1$ for any $m \in \Z$.
Fix an arbitrary $m \in \Z$. Let
\[
M : = 1+ | m|. 
\]
For any $n \in \N $, consider the event
\[
A_n: = \left\{  \xi_{ n M } = 1, \xi_{m-n M}  =1 \right\} =\{ nM, m- nM \in \X\}.
\]
First observe that by our choice of $M$, we have
\[
n \in \N \an \{ n M , m -n M\} \cap \{n' M, m -n'M\} \ne \emptyset \Longrightarrow n = n'. 
\]
Also observe that $A_n$'s are increasing event. 
Therefore, 
by the negative association of the determinantal point process  $\X$ (see Lyons \cite[Theorem 3.7]{Lyons-ICM}),  the family $\{A_n: n \in \N\}$ are pairwise negatively correlated, that is, 
\begin{equation}\label{XX-1}
\PP(A_n \cap A_{n'}) \le \PP(A_n)\PP(A_{n'}) \ \ \ \forall n, n' \in \N \an n \ne n'.
\end{equation}
We also have 
$$
\PP(A_n) = \E( \xi_{ n M } \xi_{m-n M} ) =  | \widehat{f}(0)|^2 - | \widehat{f}(m -2 n M)|^2
$$
and $ \sum_n \widehat{f}(m -2 n M)|^2 \le \|f\|_2^2<\infty$, hence 
\begin{equation}\label{XX-2}
\sum_{n\in \N} \PP(A_n) = + \infty.
\end{equation}
By (\ref{XX-1}), (\ref{XX-2}) and  a generalized Borel-Cantelli lemma (see, e.g., \cite[Theorem 1]{Yan-BC}), we have
\[
\PP(\limsup A_n) = 1.
\]
By the definition of $A_n$,  we have  
\(
\{m \in \X + \X \} \supset A_n
\) for any $n\in \N$.
Therefore, 
$$
     \limsup A_n \subset \{m \in \X +\X\}
$$
so that $\PP(m \in \X + \X) = 1$. 
\end{proof}

\section{Generalization to DPP on $\Z^d$}
The results that we have obtained for determinantal point processes  on $\mathbb{Z}$
also holds for  determinantal point processes  on $\mathbb{Z}^d$ ($d \ge 1$).

The last result (Theorem \ref{arithm}) is generalized to 
$$
    \X +\X = \mathbb{Z}^d.
$$
The non syndetic property of $\X$ on $\mathbb{Z}^d$ (generalization of Theorem \ref{prop-syndetic}) states that there are arbitrarily large balls
in which there is no points from $\X$.

For DPP on $\Z^d$ we can similarly prove the sub-Gaussian property, the inequality of Salem-Littlewood type and the inequality of Khintchine-Kahane. 
 
 Theorem \ref{thm-erg} can be generalized as follows.
Let $T_1, \cdots, T_d$ be $d$ commutative measure-preserving transformations
on the probability space $(X, \mathcal{B}, \nu)$.
For any $f \in L^1(\nu)$, it is known that the following limit
$$
    \lim_{n\to\infty}\frac{1}{n^d} \sum_{0\le k_1, \cdots, k_d <n} f(T_1^{k_1}\cdots T^{k_d}_d x)
$$
exists $\nu$-a.e.
Let $\X$ be a DPP on $\Z^d$, we would like to investigate the existence
of the weighted erodic limit
\begin{equation}\label{U}
\lim_{n\to\infty}\frac{1}{\# (\X \cap [0, n]^d) } \sum_{(k_1, \cdots, k_d) \in \X \cap [0, n]^d} f(T_1^{k_1}\cdots T^{k_d}_d x).
\end{equation}
We can prove a result similar  to Theorem \ref{thm-erg} for 
the limit (\ref{U}) and for $f\in L^1(\mu)$.
To prove this, we need a generalization of Proposition \ref{Fan}.
Following \cite{Fan-Osc} to prove such a generalization, we only need the following generalized theorem of Davenport-Erd\"{o}s-LeVeque
\cite{DEL}.

\begin{theorem}\label{HighDEL} Let $(\xi_{{\bf k}})_{{\bf k} \in \N^{d}}$ be a collection of  bounded random variables with  $\sup\limits_{{\bf k}\in \N^d}\|\xi_{\bf k}\|_\infty <+\infty$.
	Let 
	$$
	X_{n}=n^{-d}\sum\limits_{k_1,\cdots, k_d \leq n}\xi_{k_1,\cdots, k_d}.
	$$
	We have   $\lim_{n\to \infty}X_n=0$ almost surely if 
  $$\sum_{n=1}^{\infty}\frac{\E|X_{n}|^2}{n}<+\infty.
  $$   

\end{theorem}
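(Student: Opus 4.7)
The plan is to follow the classical Davenport--Erd\H{o}s--LeVeque blueprint, the one substantive modification being that in $\Z^d$ the ``boundary shell'' of the box $[1,n]^d$ contains $\Theta(n^{d-1})$ sites instead of a single site. The three steps I would carry out are: first, use the uniform bound on $\xi_{\bf k}$ to deduce an $O(1/n)$ pathwise Lipschitz-in-$n$ bound for the averages $X_n$; second, apply Tonelli to upgrade the hypothesis to $\sum_n |X_n|^2/n<\infty$ a.s.; third, run a geometric-block pigeonhole to rule out $\limsup|X_n|>0$.

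For the smoothness step, set $C:=\sup_{{\bf k}\in\N^d}\|\xi_{\bf k}\|_\infty$; since $\N^d$ is countable, a.s.\ $|\xi_{\bf k}|\le C$ simultaneously for all $\bf k$ and hence $|X_n|\le C$ for all $n$. Writing $Y_n:=\sum_{k_1,\ldots,k_d\le n}\xi_{{\bf k}}$, so $X_n=Y_n/n^d$, the shell $\{{\bf k}:k_i\le n+1,\ \max_i k_i=n+1\}$ has $(n+1)^d-n^d\le d(n+1)^{d-1}$ elements (by Bernoulli's inequality), whence $|Y_{n+1}-Y_n|\le Cd(n+1)^{d-1}$. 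Decomposing
\[
X_{n+1}-X_n \;=\; \frac{Y_{n+1}-Y_n}{(n+1)^d} \,+\, Y_n\!\left(\frac{1}{(n+1)^d}-\frac{1}{n^d}\right)
\]
and combining $|Y_n|\le Cn^d$ with $1-(n/(n+1))^d\le d/(n+1)$ yields, a.s., the deterministic smoothness $|X_{n+1}-X_n|\le 2Cd/(n+1)$.

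For the block argument, I argue by contradiction on the full-measure event produced by Tonelli. If $\limsup|X_n|>0$, extract $\varepsilon>0$ and $n_1<n_2<\cdots$ with $|X_{n_j}|\ge 2\varepsilon$; telescoping the smoothness bound gives $|X_m-X_{n_j}|\le 2Cd\,|\log(m/n_j)|$, so with $\rho:=\exp(\varepsilon/(2Cd))>1$ every integer $m$ in the block $B_j:=[n_j/\rho,\,\rho n_j]$ satisfies $|X_m|\ge\varepsilon$. Each such block contributes $\sum_{m\in B_j}|X_m|^2/m\ge \varepsilon^2\sum_{m\in B_j}1/m$, which tends to $2\varepsilon^2\log\rho$ as $n_j\to\infty$. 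Thinning $(n_j)$ so that consecutive blocks are disjoint (possible because $n_j\to\infty$) produces a divergent contribution to $\sum_n|X_n|^2/n$, contradicting Tonelli.

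The main obstacle is engineering the $O(1/n)$ smoothness in dimension $d\ge 2$: the shell already carries $\sim n^{d-1}$ summands, so one has to verify that both pieces of the decomposition---the ``new mass'' term $\le Cd/(n+1)$ and the ``renormalization'' term $|X_n|\,(1-(n/(n+1))^d)\le Cd/(n+1)$---remain at the correct scale. The a priori bound $|X_n|\le C$, itself a consequence of $\sup_{\bf k}\|\xi_{\bf k}\|_\infty<\infty$, is precisely what tames the renormalization piece. Once this deterministic smoothness is in hand, the geometric-block pigeonhole is in spirit identical to the one-dimensional case treated in \cite{DEL}.
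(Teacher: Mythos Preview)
Your proof is correct, and it takes a genuinely different route from the paper's.

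The paper follows the classical Davenport--Erd\H{o}s--LeVeque scheme at the $L^2$ level: it invokes the speeding-up lemma (given $\sum a_n<\infty$ there is $\lambda_n\uparrow\infty$ with $\sum\lambda_n a_n<\infty$), uses $\lambda_n$ to build an increasing integer sequence $M_1<M_2<\cdots$ with $M_{r+1}/M_r\to 1$, and then in each block $(M_r,M_{r+1}]$ \emph{selects} the index $n_r$ minimizing $\E|X_{n_r}|^2$. A pigeonhole estimate yields $\sum_r \E|X_{n_r}|^2<\infty$, hence $X_{n_r}\to 0$ a.s., and finally the $O(1/n)$ increment bound (your Step~1, which the paper also uses) interpolates between consecutive $n_r$'s. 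Your argument instead applies Tonelli once to pass to the pathwise statement $\sum_n |X_n|^2/n<\infty$ a.s., and then runs a purely deterministic contradiction: the $O(1/n)$ smoothness forces $|X_m|\ge\varepsilon$ on a whole multiplicative window around any peak, and disjoint windows contribute a fixed positive amount to the convergent series. The trade-off: your approach avoids the auxiliary $\lambda_n$ construction and the $L^2$ minimizer selection, and is arguably more transparent; the paper's approach is more constructive in that it explicitly produces a subsequence along which convergence holds before interpolating. Both rely on exactly the same $d$-dimensional shell estimate $|X_{n+1}-X_n|\le 2Cd/(n+1)$, which---as you correctly identify---is the only place where $d\ge 2$ requires any extra care.
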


\begin{proof} Let us first recall the following elementay fact.
 If $(a_n)_{n\geq 1}$ is a sequence of positive real numbers
 such that   $\sum_{n\geq 1}a_n<\infty$, then there exists  an increasing positive sequences of real number $\lambda_n$ tending to infinity such that
$\sum_{n\geq 1}\lambda_n a_n< +\infty.$  
Actually, we can take $\lambda_n=\frac{1}{\sqrt{r_n}+\sqrt{r_{n+1}}}$ 
where  $r_n=\sum_{k\geq r} a_k$.
Thus, by the hypothesis, there exists an increasing sequence
of positive number $(\lambda_n)$ tending to infinity such that 
$$\sum_{n\geq 1} \frac{\E|X_n|^2}{n}\lambda_n<+\infty.$$
We can assume that $\lambda_n>1$. 
Define recursively
$$  M_1=1, \quad \forall r\ge 1, M_{r+1}=\left[\frac{\lambda_{M_r}}{\lambda_{M_r}-1}Mr\right]+1.$$
We have $M_1<M_2<\cdots <M_r< \cdots$, since $\frac{\lambda_{M_r}}{\lambda_{M_r}-1}>1$.
It is also easy to see that 
\begin{align}\label{Lambda}
\frac{M_{r+1}}{M_{r+1}-M_{r}}\leq \lambda_{M_{r}}.
\end{align}
Let $M_r<n_r\leq M_{r+1}$ be an integer such that 
$$\E|X_{n_r}|^2=\min_{M_r<n\leq M_{r+1}}\E |X_n|^2.$$
Then 
\begin{align*}
\E|X_{n_r}|^2&\leq \frac{1}{M_{r+1}-M_r}\sum_{n=M_r+1}^{M_{r+1}}\E|X_n|^2 
\leq  \frac{M_{r+1}}{M_{r+1}-M_r} \sum_{n=M_r+1}^{M_{r+1}}\frac{\E|X_n|^2}{n} 
\leq  \sum_{n=M_r+1}^{M_{r+1}}\frac{\E|X_n|^2}{n}\lambda_n   
\end{align*}
where we have used (\ref{Lambda}) for the last inequality. 
So $$\sum_{r\geq 1} \E|X_{n_r}|^2 \leq \sum_{n=1}^{\infty}\frac{\E|X_n|^2}{n}\lambda_n<+\infty.$$
It follows that   $\lim_{n\to \infty}X_{n_r}=0$ almost surely. 

Notice that $M_{r+1}\sim M_r$ for  $M_{r+1}\sim \frac{\lambda_{m_r}}{\lambda_{m_r}-1}M_r.$ So 
\begin{align}\label{sim1}
\lim_{r\to \infty}\frac{n_{r+1}}{n_r}=1.
\end{align}
Now we interpolate.  For any $n_r<n\leq n_{r+1}$, 
we have 
$$\left|\sum_{k_1,\cdots, k_d\leq n}\xi_{k_1,\cdots,k_d}-\sum_{k_1,\cdots, k_d\leq n_r}\xi_{k_1,\cdots,k_d}\right|\leq \sup_{k} \|\xi_{k_1,\cdots,k_d}\|_{\infty}(n_{r+1}^d-n_r^d).$$
So, by (\ref{sim1}), we have
$$\left|\frac{1}{n^d}\sum_{k_1,\cdots, k_d\leq n}\xi_{k_1,\cdots,k_d}-\frac{n_r^d}{n^d}\frac{1}{n_r^d}\sum_{k_1,\cdots, k_d\leq n_r}\xi_{k_1,\cdots,k_d}\right|\leq \sup_{k} \|\xi_{k_1,\cdots,k_d}\|_{\infty}\frac{n_{r+1}^d-n_r^d}{n^d}.$$
Then, almost surely, 
$$\lim_{n\to \infty}\frac{1}{n^d}\sum_{k_1,\cdots, k_d\leq n}\xi_{k_1,\cdots,k_d}=\lim_{n_r\to \infty}\frac{1}{n_r^d}\sum_{k_1,\cdots, k_d\leq n_r}\xi_{k_1,\cdots,k_d}=0.$$
\end{proof}


\begin{thebibliography}{1}
	
	
\bibitem{Boshernitzan1983}
Michael Boshernitzan. 
\newblock Homogeneously distributed sequences and Poincare sequences of integers of sublacunary growth.
\newblock {\em Monatsch Math.}, 96 (1983), 173-181.	
	

	
	
\bibitem{Bourgain1988}
Jean Bourgain.
\newblock On the maximal ergodic theorem for certain subsets of the intergers.
\newblock {\em Israel J. Math.}, 61 (1988), 39-72.

\bibitem{Bourgain1989-IHES}
Jean Bourgain.
\newblock Pointwise ergodic theorems for arithmetic sets.
\newblock {\em Inst. Hautes \'Etudes Sci. Publ. Math.}, (69):5--41, 1989.
  
  \bibitem{Bourgain1990}
  Jean Bourgain.
  \newblock Double recurence and almost sure convergence. 
  \newblock {\em J. f\"{u}r die Reine Angew Math.}, 404 (1990),
  140-161.
  
  \bibitem{BFKO1989}
  Jean Bourgain, Harry Furstenberg, Yitzak Katznelson and Donald S. Ornstein.
  \newblock Appendix on return-time sequences. 
  \newblock {\em Publications mat\'ematiques de l'I.H.E.S.},
   (69): 42-45, 1989.
  
  
     \bibitem{DEL}
     Harold Davenport, Paul Erd\"{o}s and Williams J. LeVeque, {\em On Weyl's criterion for uniform
     	distribution}, Michigan M. J., 10 (1963), 311-314.                                                                                                           


\bibitem{Fan2017ETDS}
Ai-Hua Fan.
\newblock Almost everywhere convergence of ergodic series.
\newblock	Ergod. Th. \& Dynam. Sys. (2017), 37, 490--511.


\bibitem{Fan2017CRAS}
Ai-Hua Fan.
\newblock Fully oscillating sequences and weighted multiple ergodic limit.
\newblock	C. R. Acad. Sci. Paris, Ser.I 355 (2017), 866-870.





\bibitem{Fan-Osc}
Ai-Hua Fan.
\newblock Weighted {B}irkhoff ergodic theorem with oscillating weights.
Ergod. Th. \& Dynam. Sys. to appear
\newblock {\em arXiv: 1705.02501}.

\bibitem{Fan-Schneider}
Ai-Hua Fan and Dominique Schneider.
\newblock Recurrence properties of sequences of integers.
\newblock {\em Sci. China Math.}, 53(3):641--656, 2010.

\bibitem{Johansson-sine}
Kurt Johansson. 
\newblock Universality for certain Hermitian Wigner matrices under weak moment conditions. 
\newblock {\em Ann. Inst. Henri Poincar\'e Probab. Stat.} 48 (2012), no. 1, 47–79.




\bibitem{Kahane-random}
Jean-Pierre Kahane.
\newblock {\em Some random series of functions}, volume~5 of {\em Cambridge
  Studies in Advanced Mathematics}.
\newblock Cambridge University Press, Cambridge, second edition, 1985.


\bibitem{KK2008a}
Jean-Pierre Kahane and Yitzhak  Katznelson.
\newblock Enriers al\'eatoires et analyse harmonique.
\newblock {\em J. d'Analyse Math\'ematique}, vol. 105 (2008),363-378.
Also see J. P. Kahane et Y. Katznelson, Entiers al\'eatoires, ensembles de Sidon, densit\'e dans le groupe de
Bohr et ensembles d’analyticit\'e, C.R. Acad. Sci. Paris, Ser I 345 (2007), 21-24.

\bibitem{KK2008b}
Jean-Pierre Kahane and Yitzhak  Katznelson.
\newblock Distribution uniforme de certaines suites d'entiers al\'eatoires dans le groupe de Bohr.
\newblock {\em J. d'Analyse Math\'ematique}, vol. 105 (2008),379-382.



\bibitem{Katznelson1967}
Yitzak Katznelson.
\newblock Sequences of integers dense in the Bohr group.
\newblock {\em Proc. Roy. Inst. of Thech}, (June 1976),79-86.




\bibitem{Lyons-stationary}
Russell Lyons and Jeffrey~E. Steif.
\newblock Stationary determinantal processes: phase multiplicity,
  {B}ernoullicity, entropy, and domination.
\newblock {\em Duke Math. J.}, 120(3):515--575, 2003.

\bibitem{DPP-L}
Russell Lyons.
\newblock Determinantal probability measures.
\newblock {\em Publ. Math. Inst. Hautes \'Etudes Sci.}, (98):167--212, 2003.

\bibitem{Lyons-ICM}
Russell Lyons.
\newblock Determinantal probability: basic properties and conjectures.
\newblock In {\em Proc. International Congress of Mathematicians 2014},
  volume~IV, pages 137--161. Seoul, Korea, 2014.

\bibitem{Rosenblatt-Wierdl}
Joseph M. Rosenblatt and M\'at\'e Wierdl.
\newblock
Pointwise ergodic theorems via harmonic analysis  
 in
{\em Ergodic Theory and its Connections with Harmonic Analysis},
Proceedings of the 1993 Alexandria Conference, pp.3-118,
K. E. Petersen
and
I. A. Salama (Eds).  
London Mathematical Society Lecture Note Series. 205, 
Cambrige University Press, 1995



\bibitem{Yan-BC}
Jia-An Yan.
\newblock A simple proof of two generalized {B}orel-{C}antelli lemmas.
\newblock In {\em In memoriam {P}aul-{A}ndr\'e {M}eyer: {S}\'eminaire de
  {P}robabilit\'es {XXXIX}}, volume 1874 of {\em Lecture Notes in Math.}, pages
  77--79. Springer, Berlin, 2006.

\end{thebibliography}
%

\medskip
{\em Acknowledgement.}  The first author would like to thank  Institut Mittag-Leffler where  the work was finished during 
2017 fall term research program Fractal Geometry and Dynamics.

\end{document}